\long\def\symbolfootnote[#1]#2{\begingroup%
\def\thefootnote{\fnsymbol{footnote}}\footnote[#1]{#2}\endgroup} 
\newtheorem{theorem}{Theorem}%[chapter]
\newtheorem{corollary}[theorem]{Corollary}
\newtheorem{lemma}[theorem]{Lemma}
\newtheorem{proposition}[theorem]{Proposition}
\newtheorem{THM}[theorem]{Theorem}
\theoremstyle{definition} %% COSI DEF E REMARK NON SONO SCRITTI IN ITALIC
\newtheorem{definition}[theorem]{Definition}
\newtheorem{example}[theorem]{Example}
\theoremstyle{remark}
\newtheorem{remark}[theorem]{Remark}
\newcommand{\bt}{\begin{theorem}}
\newcommand{\et}{\end{theorem}}
\newcommand{\bl}{\begin{lemma}}
\newcommand{\el}{\end{lemma}}
\newcommand{\bp}{\begin{proposition}}
\newcommand{\ep}{\end{proposition}}
\newcommand{\bc}{\begin{corollary}}
\newcommand{\ec}{\end{corollary}}
\newcommand{\bdeff}{\begin{definition}}
\newcommand{\edeff}{\end{definition}}
\newcommand{\brem}{\begin{remark}}
\newcommand{\erem}{\end{remark}}
\newcommand{\bex}{\begin{example}}
\newcommand{\eex}{\end{example}}
\renewcommand{\r}[1]{(\ref{#1})}
\newcommand{\con}{{\mathcal C}}
\newcommand{\bR}{\mathbb{R}}
\newcommand{\be}{\begin{equation}}
\newcommand{\ee}{\end{equation}}
\newcommand{\da}{\searrow}
\newcommand{\ip}[2]{\left\langle#1,#2\right\rangle}
\newcommand{\lp}{\left(}
\newcommand{\rp}{\right)}
\newcommand{\lb}{\left[}
\newcommand{\rb}{\right]}
\newcommand{\lc}{\left\{}
\newcommand{\rc}{\right\}}
\newcommand{\Lap}{\Delta}
\newcommand{\E}{\mathbf{U}}
\newcommand{\GE}{N(\Gamma)}
\DeclareMathOperator{\Cut}{Cut}
\DeclareMathOperator{\Con}{Con}
\renewcommand{\phi}{\varphi}
\newcommand{\bi}{\begin{itemize}}
\newcommand{\iii}{\item}
\newcommand{\ei}{\end{itemize}}
\newcommand{\bd}{\begin{description}}
\newcommand{\ed}{\end{description}}
\newcommand{\bqn}{\begin{eqnarray}}
\newcommand{\eqn}{\end{eqnarray}}
\newcommand{\eqnn}{\nonumber\end{eqnarray}}
\newcommand{\nn}{\nonumber}
\newcommand{\ba}[1]{\begin{array}{#1}}
\newcommand{\ea}{\end{array}}
\newcommand{\wh}[1]{\widehat{#1}}
\newcommand{\lam}{\lambda}
\newcommand{\g}{\gamma}
\newcommand{\al}{\alpha}
\newcommand{\eps}{\varepsilon}
\newcommand{\R}{\mathbb{R}}
\newcommand{\N}{\mathbb{N}}
\newcommand{\C}{\mathbb{C}}
\newcommand{\mc}[1]{\mathcal{ #1 }}
\newcommand{\all}{\forall\,}
\newcommand{\la}{\langle}
\newcommand{\ra}{\rangle}
\newcommand{\VecM}{\mathrm{Vec}(M)}
\newcommand{\virg}[1]{``#1''}
\newcommand{\tx}[1]{\mathrm{#1}}
\newcommand{\til}[1]{\widetilde{#1}}
\newcommand{\distr}{\mc{D}}
\newcommand{\metr}{\textsl{g}}
\newcommand{\Pg}[1]{\left\{ #1 \right\}}
\newcommand{\hp}{hypothesis}
\newcommand{\EXP}{\mc{E}}
\newcommand{\Exp}{\mc{E}}
\newcommand{\lapl}{\Delta}
\newcommand{\dive}{\text{div}}
\newcommand{\grad}{\nabla}
\newcommand{\HH}{\mc{H}}
\newcommand{\tcon}{t_{con}}
\newcommand{\tcut}{t_{cut}}
\newcommand{\rvd}{rank-varying distribution}
\newcommand{\f}{f}
\newcommand{\rvsR}{rank-varying sub-Riemannian structure}
\newcommand{\bD}{\Delta}
\newcommand{\Gq}{{{\bf G}}_q}% cerca le altre
\begin{document}
\begin{center} \noindent
{\LARGE{\sl{\bf Small time heat kernel asymptotics at the sub-Riemannian cut locus}}}
\vskip 0.6 cm
Davide Barilari\\ 
{\footnotesize  CNRS, CMAP Ecole Polytechnique and equipe INRIA GECO Saclay-\^Ile-de-France, Paris, France  {\tt barilari@cmap.polytechnique.fr}}\\
\vskip 0.3cm
Ugo Boscain
\symbolfootnote[0]{This research has been supported by the European Research Council, ERC StG 2009 \virg{GeCoMethods}, contract number 239748, by the ANR Project GCM, program \virg{Blanche}, project number NT09-504490 and by the DIGITEO project CONGEO.}
\\
{\footnotesize CNRS, CMAP Ecole Polytechnique and equipe INRIA GECO  Saclay-\^Ile-de-France, Paris, France  {\tt boscain@cmap.polytechnique.fr}}
\vskip 0.3cm
Robert W.\ Neel\\
{\footnotesize Department of Mathematics, Lehigh University, Bethlehem, PA, USA \\ {\tt robert.neel@lehigh.edu}}\\
\vskip 0.5cm
\today
\end{center}

\vskip 0.3 cm
\begin{abstract} 
 For a sub-Riemannian manifold provided with a smooth volume, we relate the small time asymptotics of the heat kernel at a point $y$ of the cut locus from $x$ with roughly ``how much'' $y$ is conjugate to $x$. This is done under the hypothesis that all minimizers connecting $x$ to $y$ are strongly normal, i.e.\ all pieces of the trajectory are not abnormal. Our result is a refinement of the one of Leandre  $4t\log p_t(x,y)\to -d^2(x,y)$ for $t\to 0$, in which only the leading exponential term is detected. Our results are obtained by extending an idea of Molchanov
from the Riemannian to the sub-Riemannian case, and some details we get appear to be new even in the Riemannian context. 
These results permit us to obtain properties of the sub-Riemannian distance starting from those of the heat kernel and vice versa. 
For the Grushin plane endowed with the Euclidean volume we get the expansion $p_t(x,y)\sim t^{-5/4}\exp(-d^2(x,y)/4t)$ where $y$ is reached from a Riemannian point $x$ by a minimizing geodesic which is conjugate at $y$.
\end{abstract}

\section{Introduction}

The heat kernel on sub-Riemannian manifolds has been an object of
attention starting from the late 70s
\cite{mioheat,baudoin,garofalob,benarouscut,
bismut,brockettmansouri,gaveau,kusustroock,leandre,strichartz,taylor},
as have the geodesics and cut and conjugate loci of such manifolds
\cite{miosr3d,nostrolibro,bonnard-kupka,agrachevjp,agrexp,boscainrossi,gauthier-contact,yuri1,yuri234}.
 In this paper, we provide a general approach to relate the
sub-Riemannian distance to the small time asymptotics of the heat
kernel at the cut locus, at least in the case when there are no
abnormal minimizers to the relevant point in the cut locus.

The problem of relating the sub-Riemannian distance to the heat-kernel
 is an old problem (see for instance
\cite{hypoelliptic,mioheat,benarouscut,lanconellibook,boscain-laurent,boscainpolidoro,follandstein,jerisonsanchez,leandremaj,leandremin,montgomerybook,rothstein,sanchezcalle}).
In the following we recall some of the most relevant results.
Let $M$ be a $n$-dimensional smooth manifold provided with a complete 
sub-Riemannian structure, inducing a distance $d$, and also  provided with a smooth volume $\mu$. Let  $p_t(x,y)$
the heat kernel of the sub-Riemannian heat equation
$\partial_t\phi=\Delta \phi$, where $\Delta$ is the sub-Riemannian
Laplacian defined as the divergence of the horizontal gradient. In
particular $\Delta$ could be the sum of the squares of a choice of
vector fields defining the sub-Riemannian distance (possibly with a first order term belonging to the distribution).

\bi
\iii {\bf On the diagonal}. For some constant $C>0$ (depending on the
sub-Riemannian structure and $x$, we have
\bqn \label{eq:formula1}
p_t(x,x) =\frac{C+O(\sqrt{t})}{t^{Q/2}}.
\eqn
This result is due to Ben Arous and Leandre
\cite{benarousleandrediag}. Here $Q$ is the Hausdorff
dimension of the sub-Riemannian manifold at $x$ (see also
\cite{mioheat}).

\iii {\bf Off diagonal and off cut locus}.
Fix $x\neq y$. If $y$ is not in the cut locus of $x$ and 
there are no abnormals from $x$ to $y$, then for some constant $C>0$  (depending on the sub-Riemannian
structure, $x$, and  $y$), one has
\[
p_t(x,y) = \frac{C+O(t)}{t^{n/2}}  e^{-d^{2}(x,y)/4t}.
\]
This result is due to Ben Arous \cite{benarouscut}. See also Taylor \cite{taylor}.

\iii {\bf In any point of the space including the cut locus}.
 \bqn
 \label{eq-varadh}
\lim_{t\to0}4 t \log p_t(x,y)=-d^2(x,y).
\eqn
This result is due to Leandre \cite{leandremaj,leandremin} (see also Taylor
\cite{taylor}). It is very general 
but is rougher than the one of Ben Arous. Roughly speaking
it says that both on and off the cut locus, the leading term for
$t\to0$ has the form  $e^{-d^{2}(x,y)/4t}
$.

\ei
These results hold in particular in the Riemannian case. In that case
we have $Q=n$ and formula \r{eq-varadh} is the celebrated Varadhan
formula obtained in \cite{varadhan}.

\medskip
In this paper we give a finer result with respect to the one of
Leandre. We  show that if $y$ belongs to the cut locus of $x$ and all
minimizers connecting $x$ and $y$ are strongly normal (a
minimizer is said to be \emph{strongly normal} if every piece of it is
not abnormal) then  the rate of decay of $p_t(x,y)$ depends,
roughly, on ``how conjugate'' $x$ and $y$ are, along the minimal
geodesics connecting them.  Intuitively, the more conjugate they are,
the slower the decay.  These results include
Riemannian manifolds as a special case, for which they are
completely general, since there are no abnormal minimizers in
Riemannian geometry.  Some details of the explicit relationship between the heat
kernel asymptotics and the conjugacy of the minimal geodesics appears
to be new even in the Riemannian context. 
Our results are also completely general for certain
classes of sub-Riemannian geometries for which it is known there are
no abnormals, such as contact manifolds and CR-manifolds.
For a discussion of the presence of strictly abnormal minimizers in sub-Riemannian geometry one can see \cite{chitourjeantrelat}.

Our main result is Theorem \ref{t:mainmain} in Section \ref{s:conslapl}, which relates the heat
kernel asymptotics of $p_t(x,y)$ with what we call the \emph{hinged energy
function}
\bqn
\label{eq:hxy}
h_{x,y}(z)=\frac{1}{2}(d^{2}(x,z)+d^{2}(z,y)),\eqn on the set of
midpoints of all minimizing geodesics connecting $x$ to $y$. To avoid
overly complicated notation, we state here the following corollary
which explains what happens in the case when the first terms of the
Taylor expansion of $h_{x,y}$ have a simple expression.

\bc
 \label{t:main}
 Let $M$ be an $n$-dimensional complete sub-Riemannian manifold provided
with a smooth volume $\mu$ and let $p_{t}$ be the heat kernel of the
sub-Riemannian heat equation. Given distinct $x,y\in M$ let
$h_{x,y}(z)$ %=\frac{1}{2}(d^{2}(x,z)+d^{2}(z,y))$ 
be the hinged energy
function. Assume that there is only one optimal geodesic joining $x$
to $y$ and that it is strongly normal, and let $z_{0}$ be the midpoint
of the geodesic.

 Then $h_{x,y}(z)$ is smooth in a  neighborhood of $z_{0}$ and attains
its minimum at $z_{0}$. Moreover, if there exists a coordinate system
$(z_{1},\ldots,z_{n})$ around $z_{0}$ such that we have the expansion
 $$h_{x,y}(z)=\frac 14
d^{2}(x,y)+z_{1}^{2m_{1}}+\ldots+z_{n}^{2m_{n}}+o(|z_{1}|^{2m}+\ldots+|z_{n}|^{2m_{n}}),$$
for some integers $1\leq m_1\leq m_2 \leq \cdots \leq m_n$
then for some constant $C>0$  (depending on the sub-Riemannian
structure, $x$, and  $y$), one has
$$p_{t}(x,y)=\frac{C+o(1)}{t^{n-\sum_{i}\frac{1}{2m_{i}}}}\exp\left(-\frac{d^{2}(x,y)}{4t}\right)
.
$$
\ec
%\footnote{notice that $h_{x,y}(z_0)=d^2(x,y)/4.$}

From an analysis of the relation between the expansion of $h_{x,y}$
and the conjugacy of $x$ and $y$ we get
\bc
\label{corollarioMAIN}
Let $M$ be an $n$-dimensional complete sub-Riemannian manifold provided
with a smooth volume $\mu$ and let $p_{t}$ be the heat kernel of the
sub-Riemannian heat equation. Let $x$ and $y$ be distinct and assume that every optimal geodesic joining $x$
to $y$ is strongly normal. 

Then there exist positive
constants $C_i$, and $t_0$ (depending on
$M$, $x$, and $y$) such that
\begin{equation}\nn
(i)~~~~~~~~~~~~~~\frac{C_1}{t^{n/2}}  e^{-d^{2}(x,y)/4t} \leq p_t(x,y)
\leq\frac{C_2}{t^{n-(1/2)}}  e^{-d^{2}(x,y)/4t}, \quad \text{ for }
0<t<t_0.~~~~~~~~~~~~~~
\end{equation}
(ii) If $x$ and $y$ are conjugate along at least one minimal geodesic
connecting them, then
\[
p_t(x,y)\geq \frac{C_3}{t^{(n/2)+(1/4)}}  e^{-d^{2}(x,y)/4t}, \qquad
\text{ for } 0<t<t_0.
\]
(iii) If $x$ and $y$ are not conjugate along any
minimal geodesic joining them, then
\[
p_t(x,y) = \frac{C_4+O(t)}{t^{n/2}}  e^{-d^{2}(x,y)/4t}, \qquad \text{
for } 0<t<t_0.
\]
\ec
Note that (iii) shows that the result of Ben Arous \eqref{eq:formula1}
holds not only off the cut locus, but also on the cut locus if $x$ and
$y$ are not conjugate.

In the corollaries above the concept of sub-Riemannian manifold is
quite general. It includes Riemannian manifolds and even
sub-Rieman\-nian manifolds which are rank-varying (see Sections
\ref{s:srg} and Appendix \ref{appendix} for the precise definition).
The estimates (i) and (iii) were already known in Riemannian geometry (see \cite{Hsu} and \cite{Molchanov} respectively) while (ii) appears to be new even in the
Riemannian context.

The sub-Riemannian heat equation is intended with respect to the
sub-Riemannian Laplacian which is defined as the divergence of the
sub-Riemannian gradient. Here the divergence is computed with respect
to a smooth volume. In the equiregular case (see Definition
\ref{d:equiregular}) the most natural volume is Popp's volume,
introduced by Montgomery in his book \cite{montgomerybook}.   The
hypothesis that the sub-Laplacian is computed with respect to a smooth
volume is also essential. For rank-varying sub-Riemannian structures
or for sub-Riemannian structure which are not equiregular, one could
be tempted to define a sub-Laplacian containing diverging terms with
the Popp volume (which is also diverging). This approach is possible.
However it provides completely different results with respect to those
presented in this paper. See for instance \cite{boscain-laurent} for this approach in 
the case of the Grushin and Martinet structures.

In addition to these general bounds on the decay of $p_t(x,y)$ our
approach provides a technique for computing the heat kernel
asymptotics in concrete situations, subject, of course, to one's
ability to determine explicit information about the minimal geodesics
from $x$ to $y$ and the behavior of $h_{x,y}$ near their midpoints
(which is related to the conjugacy of the minimal geodesics).

Conversely, these results allow us to realize the old idea of getting
properties of the sub-Riemannian distance from those of the heat
kernel (see for instance \cite{hypoelliptic,montgomerybook}).  

The hypothesis that all optimal geodesics connecting $x$ to $y$ are
strongly normal is essential. In the case in which $x$ is reached by
$y$ along an abnormal minimizer, it is not clear how to measure 
how much $y$ is conjugate to $x$ since abnormal extremals are not included
in the exponential mapping and are in a sense isolated. The analysis
of the heat kernel asymptotics in the presence of abnormal minimizers
is an extremely difficult problem (also because of the lack of
information about properties of the sub-Riemannian distance) and its
study goes beyond the purpose of this paper.\\

\emph{Remark.} Notice that in our approach we start from the
sub-Rieman\-nian structure $(M,\distr,\metr)$, then we define an
intrinsic volume, and finally we build the Laplace operator naturally
associated with these data. This operator is by construction
symmetric, negative and has the form
$\lapl=\sum_{i=1}^k X_i^2+X_0$ where $\{X_i\}_{i=1}^k$ define an
orthonormal frame  satisfying the H\"omander condition and
$X_0\in\mbox{span} \{X_i\}_{i=1}^k$.

In the literature one more often finds the reverse procedure
\cite{jerisonsanchezcalleTT,rothstein} (see also  \cite{garofalob} and
references therein). One starts from a second order differential
operator with smooth coefficients ${\mathcal L}$ which is symmetric
and negative with respect to  a volume $\mu$, and then looks for a
distance as a function of which one can give estimates of the
fundamental solution of $\partial_t-{\mathcal L}$. This distance is
constructed by introducing the so-called {\sl sub-unit} curves for the
operator, see for instance (\cite{garofalob,lanconellibook}). When
${\mathcal L}$ is of the form ${\mathcal L}=\sum_{i=1}^k X_i^2+X_0$
where $\{X_i\}_{i=1}^k$ are linearly independent vector fields
satisfying the H\"omander condition, the symmetry with respect to
$\mu$ implies that $X_0\in\mbox{span} \{X_i\}_{i=1}^k$. Moreover the
distance one gets is the sub-Riemannian distance for which
$\{X_i\}_{i=1}^k$ is an orthonormal frame.

Also, let us mention that a wide literature is available about
operators of the type ${\mathcal L}=\sum_{i=1}^k X_i^2+X_0$ where
$\{X_i\}_{i=1}^k$ satisfy the H\"omander condition, but
$X_0\notin\mbox{span}\{X_i\}_{i=1}^k$.

\subsection{Structure of the paper}
 The structure of the paper is as follows.  In Section \ref{s:srg} we
introduce the concept of sub-Riemannian manifold. To avoid heavy
notation, we have decided to restrict ourselves to  the case in which
the dimension of the distribution does not depend on on the point. The
rank-varying case is postponed to Appendix \ref{appendix}. All the
results of the paper holds also in this case.

In Sections \ref{s:laplace1} we state and prove a result expressing the heat kernel asymptotic as a Laplace integral over a neighborhood of the set of midpoints of minimal geodesics (see Theorem \ref{THM:MainExpansion}). In Section \ref{s:laplace} we discuss the asymptotics of Laplace type integrals and we discuss the relation between the degeneracy of the hinged energy function $h_{x,y}$ around the midpoints the minimal geodesics connecting $x$ and $y$ and the conjugacy of the minimal geodesics connecting them (see Theorem \ref{THM:HAndConj}). Then in Section \ref{s:conslapl}, we get our main general result, namely the estimates on the heat kernel $p_t(x,y)$
as a consequence of the previous analysis (see Theorem \ref{t:mainmain}).

In Section \ref{s:examples} we apply our general results to some
relevant cases. We briefly illustrate our results on the Heisenberg
group for which both the optimal synthesis (i.e.\ the set of all
optimal trajectories) from a given point and the heat kernel are
known.

The second example is  the nilpotent free $(3,6)$ case. In this case we get an asymptotic expansion on the vertical subspace (see Section \ref{s:36}), where all points are conjugate along minimal geodesics, which agrees with the fact that there exists a one parameter family of optimal geodesic reaching these points.

Finally in Section \ref{s:gru} we study the heat kernel in the Grushin plane, with respect to the standard Lebesgue measure. The Grushin structure is the rank-varying sub-Riemannian structure on the plane $(x,y)$ such that $X=\partial_x$ and $Y=x\partial_y$ define an orthonormal frame. The corresponding sub-Laplacian is $\lapl=X^2+Y^2$. Starting from the Riemannian point $q_0=(-1,-\pi/4)$ we get, for the asymptotic at the point $q_1=(1,\pi/4)$, which is reached from $q_0$ by a minimizing geodesic which is conjugate at $q_1$, the expression $p_t(q_0,q_1)\sim t^{-5/4}\exp(-d^2(q_0,q_1)^2/4t)$, computing explicitly the degeneration of the hinged energy function. To our knowledge this is the first time in which an expansion of the type $t^{-\alpha} \exp(-d^2(q_0,q_1)/4t)$, with $\alpha \neq N/2$ for an integer $N$, is observed in the Riemannian or sub-Riemannian context.

\section{Sub-Riemannian geometry}\label{s:srg}
We start by recalling the definition of sub-Riemannian manifold in the
case of a distribution of constant rank $k$ smaller than the dimension of the space. 
For the more general definition of rank-varying sub-Riemannian structure (including as a particular case Riemannian structures) see Appendix
\ref{appendix}.
\bdeff
A \emph{sub-Riemannian manifold} is a triple $(M,\distr,\metr)$,
where
\bi
\iii[$(i)$] $M$ is a connected orientable smooth manifold of dimension
$n\geq 3$;
\iii[$(ii)$] $\distr$ is a smooth distribution of constant rank $k< n$
satisfying the \emph{H\"ormander condition}, i.e.\ a smooth map that
associates to $q\in M$  a $k$-dimensional subspace $\distr_{q}$ of
$T_qM$ such that
\bqn \label{Hor}
\qquad \text{span}\{[X_1,[\ldots[X_{j-1},X_j]]]_{q}~|~X_i\in\overline{\distr},\,
j\in \N\}=T_qM, \ \all q\in M,
\eqn
where $\overline{\distr}$ denotes the set of \emph{horizontal smooth
vector fields} on $M$, i.e.\
$$\overline{\distr}=\Pg{X\in\mathrm{Vec}(M)\ |\ X(q)\in\distr_{q}~\
\forall~q\in M}.$$
\iii[$(iii)$] $\metr_q$ is a Riemannian metric on $\distr_{q}$ which is smooth
as function of $q$. We denote  the norm of a vector $v\in \distr_{q}$
by 
%$|v|_{\metr}$, i.e.\  
$|v|_{\metr}=\sqrt{\metr_{q}(v,v)}.$
\ei
\edeff

A Lipschitz continuous curve $\g:[0,T]\to M$ is said to be
\emph{horizontal} (or \emph{admissible}) if
$$\dot\g(t)\in\distr_{\g(t)}\qquad \text{ for a.e.\ } t\in[0,T].$$

Given an horizontal curve $\g:[0,T]\to M$, the {\it length of $\g$} is
\bqn
\label{e-lunghezza}
\ell(\g)=\int_0^T |\dot{\g}(t)|_{\metr}~dt.
\eqn
Notice that $\ell(\g)$ is invariant under time reparametrization of
the curve $\g$.
The {\it distance} induced by the sub-Riemannian structure on $M$ is the
function
\bqn
\label{e-dipoi}
d(q_0,q_1)=\inf \{\ell(\g)\mid \g(0)=q_0,\g(T)=q_1, \g\ \mathrm{horizontal}\}.
\eqn
The \hp\ of connectedness of $M$ and the H\"ormander condition
guarantees the finiteness and the continuity of $d(\cdot,\cdot)$ with
respect to the topology of $M$ (Chow-Rashevsky theorem, see for
instance \cite{agrachevbook}). The function $d(\cdot,\cdot)$ is called
the \emph{Carnot-Caratheodory distance} and gives to $M$ the structure
of a metric space (see \cite{agrachevbook}).

Locally, the pair $(\distr,\metr)$ can be given by assigning a set of
$k$ smooth vector fields spanning $\distr$ and that are orthonormal
for $\metr$, i.e.\
\bqn
\label{trivializable}
\distr_{q}=\text{span}\{X_1(q),\dots,X_k(q)\}, \qquad
\metr_q(X_i(q),X_j(q))=\delta_{ij}.
\eqn
In this case, the set $\Pg{X_1,\ldots,X_k}$ is called a \emph{local
orthonormal frame} for the sub-Riemannian structure.

The sub-Riemannian metric can also be expressed locally in ``control
form'' as follows. We consider the control system,
\bqn\label{eq:lnonce0}
\dot q=\sum_{i=1}^m u_i X_i(q)\,,~~~u_i\in\R\,,
\eqn
and the problem of finding the shortest curve that joins two fixed
points $q_0,~q_1\in M$ is naturally formulated as the optimal control
problem
\bqn \label{eq:lnonce}
~~~\int_0^T  \sqrt{
\sum_{i=1}^m u_i^2(t)}~dt\to\min, \ \ \qquad q(0)=q_0,~~~q(T)=q_1\neq q_{0}.
\eqn

\bdeff\label{d:equiregular}
Define $\distr^{1}:=\distr,
\distr^{i+1}:=\distr^{i}+[\distr^{i},\distr]$, for every $i\geq1$. A
sub-Riemannian manifold is said to be \emph{equiregular} if for each
$i\geq1$, the dimension of $\distr^{i}_{q}$ does not depend on the
point $q\in M$. For an equiregular sub-Riemannian manifold the
H\"ormander condition guarantees that there exists (a minimal) $m\in
\N$, called \emph{step} of the structure, such that
$\distr_{q}^{m}=T_{q}M$, for all $q\in M$. The sequence
$$\mc{G}:=
(\underset{\begin{smallmatrix} \shortparallel \\ k
\end{smallmatrix}
}{\text{dim}\,\distr},\text{dim}\,\distr^{2},\ldots,\underset{\begin{smallmatrix}
\shortparallel \\ n
\end{smallmatrix}
}{\text{dim}\, \distr^{m}}),$$
is called the \emph{growth vector} of the sub-Riemannian manifold. The
growth vector permits us to compute the Hausdorff dimension of $(M,d)$
as a metric space (see \cite{mitchell})
\bqn \label{eq:Q}
Q=\sum_{i=1}^m i k_i, \qquad k_i:=\dim \distr^i - \dim \distr^{i-1}.
\eqn
In particular the Hausdorff dimension is always bigger than the
topological dimension of $M$.
\edeff

\bdeff \label{d:nilp}
A sub-Riemannian manifold is said to be \emph{nilpotent} if $M$ is a
nilpotent Lie group and the sub-Riemannian structure is left-invariant
with respect to the group operation.
\edeff

\subsection{Minimizers and geodesics}
 In this section we briefly recall some facts about sub-Riemannian
geodesics. In particular, we define the
sub-Riemannian exponential map.

 \bdeff A \emph{geodesic} for a sub-Riemannian manifold
$(M,\distr,\metr)$ is an admissible curve $\g:[0,T]\to M$ such that
 $|\dot{\g}(t)|_{\metr}$  is constant and, for every sufficiently
small interval $[t_1,t_2]\subset [0,T]$, the restriction
$\g_{|_{[t_1,t_2]}}$ is a minimizer of $\ell(\cdot)$.
A geodesic for which $|\dot{\g}(t)|_{\metr}=1$ is said to be
parameterized by arclength.

A sub-Riemannian manifold is said to be \emph{complete}
if $(M,d)$ is complete as a metric space.
If the sub-Riemannian
metric is the restriction to $\distr$ of a complete Riemannian metric,
then it is complete.

Under the assumption that the manifold is complete, a
version of the Hopf-Rinow theorem (see \cite[Chapter 2]{burago})
implies that the manifold is geodesically complete (i.e. all geodesics are defined for every $t\geq0$) and that for every two points there exists a minimizing geodesic
connecting them.
\edeff
Trajectories minimizing the distance between two points are solutions
of first order necessary conditions for optimality, which in the case
of sub-Riemannian geometry are given by a weak version of the
Pontryagin Maximum Principle (\cite{pontrybook}).
\bt\label{t:pmpw}
Let $q(\cdot):t\in[0,T]\mapsto q(t)\in M$ be a solution of the
minimization problem \eqref{eq:lnonce0},\eqref{eq:lnonce} such that
$|\dot q(t)|_{\metr}$  is constant and $u(\cdot)$ be the
corresponding control.
Then there exists a Lipschitz map $p(\cdot): t\in [0,T] \mapsto
p(t)\in T^{*}_{q(t)}M\setminus\{0\}$  such that one and only one of
the following conditions holds:
\bi
\iii[(i)]
$
\dot{q}=\dfrac{\partial H}{\partial p}, \quad
\dot{p}=-\dfrac{\partial H}{\partial q}, \quad
u_{i}(t)=\la p(t), X_{i}(q(t))\ra,
\\$
where $H(q,p)=\frac{1}{2} \sum_{i=1}^{k} \la p,X_{i}(q)\ra^{2}$.
\vspace{0.2cm}
\iii[(ii)]
$
\dot{q}=\dfrac{\partial \HH}{\partial p}, \quad
\dot{p}=-\dfrac{\partial \HH}{\partial q}, \quad
0=\la p(t), X_{i}(q(t))\ra,
\\$
where $\HH(t,q,p)=\sum_{i=1}^{k}u_{i}(t) \la p,X_{i}(q)\ra$.
\ei
\et
\noindent
For an elementary proof of Theorem \ref{t:pmpw} see \cite{nostrolibro}.

\brem If $(q(\cdot),p(\cdot))$ is a solution of (i) (resp.\ (ii)) then
it is called a \emph{normal extremal} (resp.\ \emph{abnormal
extremal}). It is well known that if $(q(\cdot),p(\cdot))$ is a normal
extremal then $q(\cdot)$ is a geodesic (see
\cite{nostrolibro,agrachevbook}). This does not hold in general for
abnormal extremals. An admissible trajectory $q(\cdot)$ can be at the
same time normal and abnormal (corresponding to different covectors).
If an admissible trajectory $q(\cdot)$ is normal but not abnormal, we
say that it is \emph{strictly normal}.

Abnormal extremals are very difficult to treat and many questions are
still open. For instance it is not known if abnormal minimizers are
smooth (see \cite{montgomerybook}).
\erem

\bdeff
A minimizer $\gamma:[0,T]\to M$ is said to be \emph{strongly normal}
if for every $[t_1,t_2]\subset [0,T]$, $\gamma|_{[t_1,t_2]}$ is not an
abnormal minimizer.\edeff

In the following we denote by $(q(t),p(t))=e^{t\vec{H}}(q_{0},p_{0})$
the solution of $(i)$ with initial condition
$(q(0),p(0))=(q_{0},p_{0})$. Moreover we denote by $\pi:T^{*}M\to M$
the canonical projection.

 Normal extremals (starting from $q_{0}$) parametrized by arclength
correspond to initial covectors $p_{0}\in \Lambda_{q_{0}}:=\{p_{0}\in
T^{*}_{q_{0}}M | \, H(q_{0},p_{0})=1/2\}.$
\bdeff  \label{d:exp}
Let $(M,\distr,\metr)$ be a complete sub-Riemannian manifold and
$q_0\in M$. We define the \emph{exponential map} starting from $q_{0}$
as
\bqn \label{eq:expmap}
\EXP_{q_{0}}: \Lambda_{q_{0}}\times \R^{+} \to M, \qquad
\EXP_{q_{0}}(p_{0},t)= \pi(e^{t\vec{H}}(q_{0},p_{0})).
\eqn
\edeff

Next, we recall the definition of cut and conjugate time.

\bdeff \label{def:cut} Let $q_{0}\in M$ and
$\g(t)$ an arclength geodesic starting from $q_{0}$.
The \emph{cut time} for $\g$ is $\tcut(\g)=\sup\{t>0,\, \g|_{[0,t]}
\text{ is optimal}\}$. The \emph{cut locus} from $q_{0}$ is the set
$\Cut(q_{0})=\{\g(\tcut(\g)), \g$  arclength geodesic from
$q_{0}\}$.
\edeff

\bdeff \label{def:con} Let $q_{0}\in M$ and
$\g(t)$ a normal arclength geodesic starting from $q_{0}$ with initial
covector $p_{0}$. Assume that $\g$ is not abnormal.
The \emph{first conjugate time} of $\g$ is
$\tcon(\g)=\min\{t>0,\  (p_{0},t)$ is a critical point of
$\EXP_{q_{0}}\}$. The (first) \emph{conjugate locus} from $q_{0}$ is the
set $\Con(q_{0})=\{\g(\tcon(\g)), \g$  arclength geodesic from
$q_{0}\}$.

\edeff
It is well known that, for a geodesic $\g$ which is not abnormal, the
cut time $t_{*}=\tcut(\g)$ is either equal to the conjugate time or
there exists another geodesic $\til{\g}$ such that
$\g(t_{*})=\til{\g}(t_{*})$ (see for instance \cite{agrexp}).

\brem
In sub-Riemannian geometry, the exponential map starting from $q_{0}$ is never a local
diffeomorphism in a neighborhood of the point $q_{0}$ itself. As a consequence the
sub-Riemannian balls are never smooth and both the cut and the
conjugate loci from $q_{0}$ are adjacent to the point $q_{0}$ itself
(see \cite{agratorino}).
\erem

\subsection{The sub-Laplacian}\label{s:lapl}
In this section we define the sub-Riemannian Laplacian
 on a sub-Riemannian manifold $(M, \distr, \metr)$, provided with a
smooth volume $\mu$.

The sub-Laplacian is the natural generalization of the
Laplace-Beltrami operator defined on a Riemannian manifold, defined as
the divergence of the gradient.

The sub-Riemannian gradient can be defined with no
difficulty.
On a sub-Riemannian manifold $(M,\distr,\metr)$, the gradient is the
unique operator $\grad: \mathcal{C}^\infty(M)\to \overline{\distr}$ defined by $$
\metr_q(\grad\phi(q),v)=d\phi_{q}(v), \quad \all \phi \in
\mc{C}^{\infty}(M), \, q\in M,~v\in \distr_q.$$
By definition, the gradient is a horizontal vector field. If
$X_{1},\ldots,X_{k}$ is a local orthonormal frame, it is easy to see
that it is written as follows
$\grad \phi= \sum_{i=1}^{k} X_i(\phi) X_i,$
where $X_i(\phi)$ denotes the Lie derivative of $\phi$ in the
direction of $X_i$.

The divergence of a vector field $X$ with respect to a volume $\mu$ is
the function $\dive\, X$ defined by the identity $L_{X}\mu=(\dive\, X
)\mu$, where $L_{X}$ stands for the Lie derivative with respect to $X$.

The sub-Laplacian associated with  the sub-Riemannian structure, i.e.\
$\lapl \phi=\dive(\grad \phi),$
 is written in a local orthonormal frame $X_{1},\ldots,X_{k}$ as follows
\bqn \label{eq:lapldiv}
\lapl= \sum_{i=1}^{k}X_{i}^{2}+ (\dive\, X_{i}) X_{i}.
\eqn
Notice that $\lapl$ is always expressed as the sum of squares of the
element of the orthonormal frame plus a first order term that belongs
to the distribution and depends on the choice of the volume $\mu$.

The existence of a smooth heat kernel for the operator \eqref{eq:lapldiv}, in the case of a  complete sub-Riemannian manifold, is stated in \cite{strichartz}.
 
\subsubsection{Popp's volume and the instrinsic sub-Laplacian}\label{popp}

In this section we recall how to construct an intrinsic Laplacian
(i.e.\ that depends only on the sub-Riemannian structure) in the case
of an equiregular sub-Riemannian manifold.

On a Riemannian manifold the Euclidean structure defined on the
tangent space defines in a standard way a canonical volume: the
Riemannian volume.

In the case of an equiregular sub-Riemannian manifold
$(M,\distr,\metr)$, even if there is no global scalar product defined
in $T_{q}M$, it is possible to define an intrinsic volume, namely the
Popp volume \cite{montgomerybook}. This is a smooth volume on $M$ that
is defined from the properties of the Lie algebra generated by the
family of the horizontal vector fields. In the Riemannian case this
coincide with the Riemannian volume.

On an equiregular manifold of dimension 3 the Popp volume is easily
defined as $\nu_1 \wedge \nu_2 \wedge \nu_3$, where
$\nu_1,\nu_2,\nu_3$ is the dual basis to $X_{1},X_{2}$ and
$[X_{1},X_{2}]$, where $\{X_{1},X_{2}\}$ is any local orthonormal
frame for the structure. This definition happens to be independent on
the choice of $X_1,X_2$. For the general definition see, e.g
\cite{hypoelliptic,corank1}.

Notice that the Popp volume is not the unique intrinsic volume that
one can build from the geometric structure of $(M,\distr,\metr)$.
Since a sub-Riemannian manifold is a metric space (with the
Carnot-Caratheodory distance), one can define the $Q$-dimensional
Hausdorff measure on $M$, where $Q$ is defined in \eqref{eq:Q}. In
contrast with the Riemannian case, starting from dimension 5, the
$Q$-dimensional Hausdorff measure the does not coincide in general
with Popp's  (see \cite{corank1,corank2} for details about these
results).

The intrinsic sub-Laplacian is defined as the sub-Laplacian where the
divergence is computed  with respect to the Popp volume.

\brem \label{r:group}
In the case of a left-invariant structure on a Lie group (and in
particular for a nilpotent structure), the Popp volume is
left-invariant, hence proportional to the left Haar measure.

For unimodular Lie groups, and in particular for nilpotent groups, one
gets for the intrinsic sub-Laplacian  the \virg{sum of squares} form
(see \cite{hypoelliptic})
$$\lapl=\sum_{i=1}^k X_i^2.$$
\erem

\brem \label{r:rv}
To define the Popp volume the equiregularity assumption is crucial. In
the non-equiregular case or in the rank-varying case the Popp volume
diverges approaching the non-regular points, as do the coefficients of the intrinsic
sub-Laplacian \cite{hypoelliptic,boscain-laurent}.
\erem

\section{General expression as a Laplace integral}
\label{s:laplace1}

From now on by a sub-Riemannian manifold we mean a structure in the sense of Section \ref{s:srg} or Appendix \ref{appendix}, which include as a particular case Riemannian structures. 

In the following we denote by $\Sigma \subset M\times M$ the set of
pairs $(x,y)$ with $x\neq y$ such that there exists a unique
minimizing
geodesic from $x$ to $y$ and such that this geodesic is strictly
normal and not conjugate. Notice that $\Sigma$ is an open set in
$M\times M$ (see \cite{agrachevsmooth,riffordtrelat} and \cite[Chapter: Regularity of SR
distance]{nostrolibro}).

Recall that the heat kernel is the the fundamental solution of the
heat equation  $\partial_t\phi=\Delta \phi$, where $\Delta$ is the
sub-Riemannian Laplacian defined with respect to some smooth volume
$\mu$ on a sub-Riemannian manifold
$M$.
We begin by recalling the asymptotic expansion of the heat kernel
away from the cut locus, due to Ben Arous \cite{benarouscut} (see
Theorem 3.1, and adjust for the fact that our heat kernel is for
``$\Lap$'' rather than ``$\Lap/2$'').
\bt\label{t:BA}
Let $M$ be an $n$-dimensional complete sub-Riemannian
manifold in the sense of Section \ref{s:srg} or Appendix \ref{appendix}, with a smooth volume $\mu$ and associated heat kernel $p_t$ and
let $(x,y)\in \Sigma$.
Then for every non-negative integer $m$, we have the following
asymptotic expansion as $t\da 0$:
%\bqn\label{eq:BA}
%p_{t}(x,y)= \frac{1}{t^{n/2}}\exp\left(-\frac{d^{2}(x,y)}{4t}\right)(c_{0}(x,y)+c_{1}(x,y)t+\cdots+c_{m}(x,y)t^{m}+O(t^{m+1})).
%\nn
%\eqn
\bqn\label{eq:BA}
p_{t}(x,y)= \frac{1}{t^{n/2}}\exp\left(-\frac{d^{2}(x,y)}{4t}\right)\left(\sum_{j=0}^{m}c_{j}(x,y)t^{j}+O(t^{m+1})\right).
\nn
\eqn
Here the $c_i$ are smooth functions on $\Sigma$ with $c_0(x,y)>0$.  Further, if
$K\subset \Sigma$ is a compact set, then the expansion is uniform over $K$.
\et

We will also need some preliminary control of the heat kernel at the
cut locus, which is provided by a well-known result of Leandre
\cite{leandre}.  In particular, Theorem 1 of \cite{leandremaj} and
Theorem 2.3 of \cite{leandremin} give (again taking into account our
normalization of the heat kernel)

\bt\label{THM:Leandre}
Let $M$ be a  complete sub-Riemannian
manifold with a
smooth volume $\mu$ and associated heat kernel $p_t$.  For any
compact subset $K$ of $M\times M$, the following holds uniformly for
$(x,y)\in K$:
\[
\lim_{t\searrow 0} 4t \log p_t(x,y) = -d^2(x,y) .
\]
\et
\brem Theorem \ref{t:BA} and \ref{THM:Leandre} were originally stated in $\R^n$ for sub-Riemannian metrics whose orthonormal frame consists of vector fields which are bounded with bounded derivatives. However, it is not hard to see that these results hold in the more general context of  complete sub-Riemannian structures (where closed balls are compact).
\erem
{\bf Notation}.  
In what follows we use sometimes the abbreviation $E(x,y)$ $=d^2(x,y)/2$ for the energy function.  For any two
distinct points $x$ and $y$, we let $\Gamma$ be the set of midpoints
of minimal geodesics from $x$ to $y$.  Further, we let $\GE$ be a
neighborhood of $\Gamma$, which we will feel free to make small enough
to satisfy various assumptions.  Finally, we
let $h_{x,y}(z) = E(x,z)+E(z,y)$ be the hinged energy function.  It's
clear from the definition that $h_{x,y}(z)$ is continuous.
\bl \label{Lem:AboutGamma}
The function $h_{x,y}$ attains its minimum exactly on $\Gamma$ and
$\min h_{x,y}= d^{2}(x,y)/4$.
\el
\begin{proof}
Let us consider a geodesic joining $x$ and $y$ and denote its midpoint by $z_0$.
We want to prove that $h_{x,y}(z_0)\leq h_{x,y}(z)$ for every $z$
and that we have equality if and only if $z$ is a midpoint of a
geodesic joining $x$ and $y$.

Let $a=d(x, z_0)=d(z_0,y)$, $b=d(x,z)$, and $c=d(y,z)$.
By the triangle inequality, we have $2a\leq b+c$. Moreover we can
assume that both $b$ and $c$ are less than or equal to $2a$, since
otherwise the statement is trivial.  Let $\eps \geq 0$ be such that
$2a+\eps=b+c$ and compute
\begin{align*}
h_{x,y}(z) &=\frac{1}{2}(b^{2}+c^{2})
 =\frac{1}{2}((2a+\eps-c)^{2}+c^{2})\\
&\geq a^{2}+(a-c)^{2}+\frac{\eps^{2}}{2}+\eps(2a-c)
\geq a^{2}=h_{x,y}(z_0)
\end{align*}
Moreover we have equality in the two inequalities if and only if
$\eps=0$ and $a=c$, which is precisely the case where $z$ is the
midpoint of a geodesic
joining  $x$ and $y$. Finally $h_{x,y}(z_0)=d^{2}(x,y)/4$.
\end{proof}

We will need some basic assumptions about, and properties of, $\GE$.
First, basic properties of the distance function on $M$ imply that
$\Gamma$ is compact.  Next, all of our work will take place under the
condition that we are ``away from'' any abnormal geodesics.  In
particular, assume that $x$ and $y$ are distinct and that every
minimizer from $x$ to $y$ is a strongly normal geodesic.  While we
certainly allow $y$ to be in the cut locus of $x$ (which is a
symmetric arrangement),
the midpoint of every minimal geodesic from $x$ to $y$ will be a
positive distance from the cut loci of both $x$ and $y$.

More precisely, let $\lam\in T^{*}_xM$ be a covector  such that
$\Exp_x(t\lam)$ for $t\in[0,d(x,y)]$ parametrizes a minimal geodesic from
$x$ to $y$. 
(Here we adopt the convention that
$\Exp_x(\lambda)=\pi\circ e^{\vec{H}}(x,\lambda)$.)
 Call this geodesic $\gamma$, and let $z_0$ be its
midpoint.  Since the cut time along
$\gamma$ is at least $d(x,y)$ and since the cut time is continuous as
a function on $T^{*}_xM$ near $\lam$, it follows that $\Exp_x$ is a
diffeomorphism from a neighborhood $U$ of $(d(x,y)/2)\lam$ to a
neighborhood $U^{\prime}$ of $z_0=\Exp_x\lp (d(x,y)/2)\lam\rp$.  Further,
assuming $U$ small
enough, there is a unique minimal geodesic from $x$ to each point
$\Exp_x(\xi)$ in $U^{\prime}$ given by $\Exp_x(s\xi)$ for $s\in[0,1]$, and
this geodesic is not conjugate.  In the case when $\xi=(d(x,y)/2)\lam$, we
have the ``first half'' of $\gamma$, from $x$ to $z_0$.  Because
$\gamma$ is strongly normal, this piece of $\gamma$ is strictly
normal.  Because the property of being strictly normal is an open
condition on geodesics (see \cite{agrachevsmooth,riffordtrelat} and \cite[Chapter: Regularity of SR
distance]{nostrolibro}), after possibly
shrinking $U$ and $U^{\prime}$ we have that all  of the minimal
geodesics from $x$ to points in $U^{\prime}$ are also strictly normal.

One consequence is that by choosing $U$ (and thus $U^{\prime}$) small
enough, the distance function from $x$ is smooth on
$U^{\prime}$ (which we recall is some neighborhood of $z_0$, the midpoint of
a minimal geodesic $\gamma$ from $x$ to $y$).  Another is that (after
possibly further shrinking $U$ and $U^{\prime}$) the Ben Arous expansion holds
for $p_t(x,z)$ uniformly for $z\in U^{\prime}$.

Note that the discussion in the previous paragraph also holds if we
reverse the roles of $x$ and $y$.  Then,
since $\Gamma$ is compact, we see that for sufficiently small neighborhood $\GE$
the distance functions from both $x$ and $y$ are smooth on $\GE$ and
the Ben Arous expansion holds for both $p_t(x,z)$ and $p_t(y,z)$
uniformly for $z\in \GE$.  It follows that $h_{x,y}$ is also smooth on
$\GE$.  These are the key consequences of assuming that every
minimizer from $x$ to $y$ is a strongly normal geodesic.  We will also
occasionally take
advantage of the structure of the exponential map based at either $x$
or $y$ in a neighborhood of any point $z\in \GE$.  From now on, we
will assume that, for such $x$ and $y$, $\GE$ is chosen in this way.

We now describe the main idea for determining the expansion on the cut
locus.  The intuition benefits from recalling that the heat kernel is
also the transition density of Brownian motion on $M$.  By the
semi-group property (or the Markov property, from a stochastic point
of view), a particle that travels from a point $x$ to a point $y\neq
x$ in time $t$ first goes to some ``halfway'' point at time $t/2$, and
then continues the rest of the way to $y$.  For small $t$, a particle
traveling from $x$ to $y$ is most likely to do so via a path which is
approximately a geodesic (traversed at uniform speed).  This is the
usual intuition from large deviation theory.  Thus, at time $t/2$,
such a particle is likely to be near the midpoint of some minimal
geodesic from $x$ to $y$.  The key insight, originally due to
Molchanov \cite{Molchanov} in the Riemannian case, is that, even in the case
$y\in\Cut(x)$, we can choose $\GE$ as just discussed so that the expansion of
Ben Arous can be applied to both the first and second halves of the
particle's journey from $x$ to $y$ (at least with high probability).
The expansion at the cut locus is thus obtained by ``gluing together''
two copies of the Ben Arous expansion along the midpoints of the
minimal geodesics form $x$ to $y$.  Making this argument precise,
using only geometric analysis (we use stochastic notions
only to bolster our intuition in the present paper), provides the
proof of the next theorem. (The same \virg{gluing idea} was employed to compute asymptotics of logarithmic derivatives in the Riemannian case in \cite{Neel,neelstroock}.)

\begin{THM}\label{THM:MainExpansion}
Let $M$ be an $n$-dimensional  complete sub-Riemannian
manifold in the sense of Section \ref{s:srg} or Appendix \ref{appendix}, and let $x$
and $y$ be distinct points such that all minimal geodesics from $x$ to
$y$ are strongly normal.  Then for $\GE$ and $c_0$ as above there exists
$\delta>0$ such that
\begin{align*}
p_t(x,y) = \int_{\GE} \frac{2^n}{t^n} &e^{-h_{x,y}(z)/t}\lp
c_0(x,z)c_0(z,y)+O(t)\rp \, \mu(dz)\\
 &+ o\lp \exp\lb \frac{-E(x,y)/2 -\delta}{t} \rb \rp .
\end{align*}
Here the $O(t)$ term in the integral is uniform over $\GE$.
\end{THM}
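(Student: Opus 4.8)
The plan is to build the expansion from the semigroup (Chapman--Kolmogorov) identity
\[
p_t(x,y)=\int_M p_{t/2}(x,z)\,p_{t/2}(z,y)\,\mu(dz),
\]
decomposing the integral over $M$ into three pieces: the neighborhood $\GE$ of $\Gamma$ fixed above; a compact shell $K:=\overline{B(x,R)}\setminus\GE$ (compact since, by completeness, closed balls are compact); and a tail $M\setminus B(x,R)$, where $R>0$ is a radius, chosen large only at the end, with $\GE\subseteq B(x,R)$. The $\GE$--integral will be the main term, and the $K$-- and tail--integrals will be shown to be $o\bigl(\exp[(-E(x,y)/2-\delta)/t]\bigr)$ for a suitable $\delta>0$.

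On $\GE$, the point of the earlier construction is exactly that $(x,z),(y,z)\in\Sigma$ for every $z\in\GE$ and that $\overline{\GE}$ lies inside a compact set over which the Ben Arous expansion of Theorem~\ref{t:BA} is uniform in $z$, for both base points (using the symmetry $p_s(z,y)=p_s(y,z)$). I would apply that expansion with $m=0$ to $p_{t/2}(x,z)$ and to $p_{t/2}(z,y)$, substitute $s=t/2$, rewrite exponents via $E(x,z)=d^2(x,z)/2$, and multiply the two one-term expansions; since $c_0$ is bounded on $\overline{\GE}$, the cross terms are $O(t)$ uniformly, and one gets, uniformly for $z\in\GE$,
\[
p_{t/2}(x,z)\,p_{t/2}(z,y)=\frac{2^n}{t^n}\,e^{-h_{x,y}(z)/t}\bigl(c_0(x,z)c_0(z,y)+O(t)\bigr),
\]
whose integral over $\GE$ is precisely the first term of the statement.

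To dispose of the rest, I first fix $\delta$. By Lemma~\ref{Lem:AboutGamma}, $h_{x,y}$ attains its minimum $E(x,y)/2=d^2(x,y)/4$ exactly on the compact set $\Gamma$; since $h_{x,y}$ is continuous and proper (indeed $h_{x,y}(z)\ge\tfrac{1}{4}\bigl(d(x,z)+d(z,y)\bigr)^2$, which leaves every compact along with $z$), its sublevel sets $\{h_{x,y}\le E(x,y)/2+\eta\}$ are compact and decrease to $\Gamma\subset\GE$ as $\eta\downarrow0$, so there is $\delta'>0$ with $h_{x,y}\ge E(x,y)/2+\delta'$ on $M\setminus\GE$; set $\delta:=\delta'/2$. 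On the compact set $K$, Leandre's theorem (Theorem~\ref{THM:Leandre}) gives, for each $\eps>0$ and all small $t$, $p_{t/2}(x,z)\le e^{-(d^2(x,z)-\eps)/(2t)}$ and similarly for $p_{t/2}(z,y)$, uniformly in $z$; multiplying yields $p_{t/2}(x,z)\,p_{t/2}(z,y)\le e^{-(h_{x,y}(z)-\eps)/t}\le e^{-(E(x,y)/2+\delta'-\eps)/t}$, and taking $\eps<\delta$ and integrating over $K$ (which has finite $\mu$-measure) makes this piece $o\bigl(e^{-(E(x,y)/2+\delta)/t}\bigr)$. For the tail I would invoke a standard Gaussian upper bound for the sub-Riemannian heat kernel, $p_s(x,w)\le C\,s^{-N}e^{-c\,d^2(x,w)/s}$ for $s\in(0,1]$, together with the sub-Markov bound $\int_M p_{t/2}(z,y)\,\mu(dz)\le1$, to obtain
\[
\int_{M\setminus B(x,R)}p_{t/2}(x,z)\,p_{t/2}(z,y)\,\mu(dz)\le\Bigl(\sup_{d(x,w)>R}p_{t/2}(x,w)\Bigr)\int_M p_{t/2}(z,y)\,\mu(dz)\le C'\,t^{-N}e^{-c'R^2/t},
\]
and then choosing $R$ so large that $c'R^2>E(x,y)/2+\delta$ makes this $o\bigl(e^{-(E(x,y)/2+\delta)/t}\bigr)$ as well; summing the three contributions finishes the argument.

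The main obstacle is the tail estimate: Leandre's theorem is uniform only over compact sets, so bounding the mass contributed by points $z$ far from both $x$ and $y$ requires a genuinely global a priori bound on $p_t$ --- a Gaussian-type upper bound --- and it is here (through compactness of closed balls) that completeness really enters. Everything else is bookkeeping: checking that two rescaled copies of the Ben Arous expansion glue to the clean factor $2^n t^{-n} e^{-h_{x,y}/t}$, and ordering the constants $\delta',\delta,\eps,R$ so that the leftover integrals are strictly exponentially smaller than $e^{-E(x,y)/(2t)}$.
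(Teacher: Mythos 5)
Your proposal is correct and follows essentially the same route as the paper: Chapman--Kolmogorov, the uniform Ben Arous expansion glued on $\GE$ to produce the factor $2^n t^{-n}e^{-h_{x,y}(z)/t}$, Leandre's uniform estimate on the compact shell $\overline{B(x,R)}\setminus\GE$, and a separate treatment of the tail $M\setminus B(x,R)$ in the non-compact case. The only difference is that for that tail the paper simply asserts smallness for $R$ large, whereas you supply an explicit mechanism (a global Gaussian upper bound together with $\int_M p_{t/2}(z,y)\,\mu(dz)\le 1$), and you justify $\inf_{M\setminus\GE}h_{x,y}>E(x,y)/2$ via properness of $h_{x,y}$ rather than only on compact sets --- both are harmless refinements, though the global Gaussian bound you invoke should be accompanied by a precise reference in this generality.
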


\emph{Proof:} By the semi-group property (or Chapman-Kolmogorov
equation, for probabilists), we have
\[
p_t(x,y) = \int_{M} p_{t/2}(x,z) p_{t/2}(z,y) \, \mu(dz) .
\]
We first divide $M$ into two regions, $\GE$ and $M\setminus \GE$.  As
just discussed, both
$p_t(x,\cdot)$ and $p_t(\cdot,y)$ are uniformly approximated by the
Ben Arous expansion on $\GE$ (since we assume that $\eps>0$ is
sufficiently small).
Using just the first term, we see that
\begin{align*}
%&= \int_{\GE} (t/2)^{-n/2}e^{-d^2(x,z)/2t}\lp
%c_0(x,z)+O(t)\rp \times\\
%&\times (t/2)^{-n/2} e^{-d^2(z,y)/2t}\lp c_0(z,y)+O(t)\rp
%\, \mu(dz) + \int_{M\setminus\GE} p_{t/2}(x,z) p_{t/2}(z,y) \, \mu(dz) \\
p_t(x,y) = \int_{\GE} &\lp\frac2t\rp^{n} e^{-h_{x,y}(z)/t}\lp c_0(x,z)c_0(z,y)+O(t)\rp \,
\mu(dz)\\
 &+ \int_{M\setminus\GE} p_{t/2}(x,z) p_{t/2}(z,y) \, \mu(dz) ,
\end{align*}
where the $O(t)$ terms are uniform over $\GE$ by the uniformity of the
Ben Arous expansion there.

Next, we estimate the integral over $M\setminus\GE$.  First, assume
that $M$ is compact.  By Theorem \ref{THM:Leandre}, we have that, on
$M$,
\[
p_t(u,v) = \exp\lb \frac{-d^2(u,v)/2 + r(t,u,v)}{2t}\rb,
\]
where $r(t,u,v)$ goes to zero uniformly with $t$ on all of $M$.  (In
the remainder of the proof, we will use $r$ to denote a function with
this property, the exact definition of which may change from line to
line.)  We see that
\[
p_{t/2}(x,z) p_{t/2}(z,y) = \exp\lb \frac{-h_{x,y}(z) + r(t,z)}{t}\rb .
\]
Further, the minimum of $h_{x,y}(z)$ on $M\setminus\GE$ is strictly
greater than $h_{x,y}(\Gamma)=E(x,y)/2$.  Because $M\setminus\GE$ has
finite volume (by compactness), we see that there exists $\delta>0$
such that
\begin{equation}\label{Eqn:OffGamma}
\int_{M\setminus\GE} p_{t/2}(x,z) p_{t/2}(z,y) \, \mu(dz) =o\lp
\exp\lb \frac{-E(x,y)/2 -\delta}{t} \rb \rp .
\end{equation}

Next, consider the case when $M$ is not compact.  Then for large
enough $R$, we see that $x$, $y$, and $\GE$ are all inside of $B_x(R)$
(the ball of radius $R$ centered around $x$).  We split the intergal
over $M\setminus\GE$ into an integral over $B_R(x)\setminus\GE$ and an
integral over $M\setminus B_R(x)$.  The previous argument can be
applied to the integral over $B_R(x)\setminus\GE$.  Further, for large
enough $R$, the integral over
$M\setminus B_R(x)$ is also $o\lp \exp\lb (-E(x,y)/2 -\delta)/t \rb
\rp$.  Thus Equation \eqref{Eqn:OffGamma} holds in the case when $M$
is non-compact as well.  Combining these estimates completes the
proof.  $\Box$

This theorem, in principle, gives the small-time asymptotics of the
heat kernel in great generality.  To get more concrete information,
one needs to be able to determine the small-time asymptotics of the
integral over $\GE$.  Fortunately, this is a well-studied type of
integral, called a Laplace integral, as we shall discuss shortly.

Finally, we have stopped with the first term of the Ben Arous
expansion only for convenience.  As much of that expansion can be kept
as desired, in which case the $c_0(x,z)c_0(z,y)+O(t)$ in the integrand
is replaced by a more general product of Taylor series.  However, it
is unclear how much additional information this really provides.  It
seems that relatively little is known about the functions $c_0$, and
higher-order coefficients in the Ben Arous expansion are even less
well-understood.  Further, including such terms means that we would also want
to determine higher-order terms in the asymptotic behavior of the
Laplace integral over $\GE$, which doesn't seem practical in general.
For these reasons, we content ourselves with the leading term.

\section{Understanding the Laplace integral} \label{s:laplace}

We wish to determine the asymptotics of the integral that appears in
Theorem \ref{THM:MainExpansion}.  To this end, we first review this
type of integral, from which we see that the behavior of $h_{x,y}$
near $\Gamma$ is the key factor.  Then we discuss the geometric
meaning of the behavior of $h_{x,y}$ in terms of the conjugacy of
minimal geodesics from $x$ to $y$.

\subsection{A brief discussion of Laplace asymptotics}
\label{s:laplace2}
We now discuss techniques for determining the small $t$
asymptotics of integrals of the type
\begin{equation}\label{Eqn:LaplaceIntegral}
\int_D f(x) e^{-g(x)/t} \, dx .
\end{equation}
Here $D$ is a compact set of $\bR^n$ having the origin in its
interior, $f$ is smooth in a neighborhood of $D$, and $g$ is a
function which is smooth in a neighborhood of $D$, is
zero at the origin, and is strictly positive on $D$ minus the origin.  (We also
assume the integral is with respect to Lebesgue measure; to treat any
other measure with a smooth density we can simply incorporate the
density into $f$.)  Our assumption that $g$ has zero as its minimum is
no loss of generality; for any $a\in \bR$ we have that
\[
\int_D f(x) e^{-(g+a)/t} \, dx = e^{-a/t} \int_D f(x) e^{-g(x)/t} \, dx .
\]

We start with the one-dimensional case.  Further, we assume that $g$
can be written as $x^{2m}$ for some integer $m\geq 1$.  Again, if this
can be accomplished by first performing a smooth change of
coordinates (and possibly shrinking $D$), we can just absorb the Jacobian into $f$.  While it is
not always possible to find such a change of coordinates, this is the most important case.  Then (see, for example,
\cite{KanwalEstrada})
\[
\int_D f(x) e^{-x^{2m}/t} \, dx = f(0)
\frac{\Gamma\lp1/(2m)\rp}{m}t^{1/(2m)} +O\lp t^{3/(2m)} \rp,
\quad\text{as $t\searrow 0$},
\]
(here ``$\Gamma$'' is the usual Gamma function, not the set of
midpoints of minimal geodesics).
We note that higher terms in this expansion are known, but in the
present context we continue to focus only on the leading term.

The higher dimensional situation is more complicated.  If we assume
that $g$ can be written as
\[
g(x) = \sum_{i=1}^n x_i^{2m_i},
\]
for some integers $1\leq m_1\leq m_2 \leq \cdots \leq m_n$, then the
expansion essentially decomposes as a product of one-dimensional
integrals.  This immediately gives
\begin{equation}\label{Eqn:DiagonalG}
\int_D f(x) e^{-g(x)/t} \, dx = t^{\frac{1}{2m_1}+\cdots+\frac{1}{2m_i}} \lb
f(0)\prod_{i=1}^n \frac{\Gamma\lp 1/2m_i\rp}{m_i} +O\lp t^{1/m_n} \rp
\rb .
\end{equation}
In particular, if the Hessian of $g$ is non-degenerate at the origin,
the Morse lemma guarantees that we can always find coordinates near
the origin in which $g$ is a sum of squares, and thus the above
expansion holds in these coordinates with $m_i=1$ for all $i$.
However, if the Hessian is degenerate, it will not necessarily be true
that $g$ can be put into the form of Equation~\eqref{Eqn:DiagonalG} by
a smooth change of coordinates.

Nonetheless, we recall that the \virg{splitting lemma} for smooth functions (which can be found in \cite{gromollmeyer}) allows us to split off non-degenerate directions and thus partially diagonalize $g$. In that spirit, the following result guarantees that, around an isolated degenerate critical point of corank 1, there always exists a coordinate set in which $g$ is diagonal. It is a generalization of the classical Morse lemma for nondegerate critical points and is a particular case of the splitting lemma just mentioned.
\bl \label{Lem:Splitting}
Let $g$ be a smooth 
 function on a neighborhood of the origin in $\R^n$, such that the origin is a local minimum of $g$ and the only critical point of $g$.  Assume that $g(0)=dg(0)=0$ and that $\dim \ker d^2g(0)=1$. Then there exists a diffeomorphism $\phi$ from a neighborhood of the origin to a neighborhood of the origin and a smooth function $\psi:\R\to \R$ such that
$$g(\phi(u))=\sum_{i=1}^{n-1} u_i^2+\psi(u_n), \qquad \text{where}\qquad \psi(u_n)=O(u_n^4).$$
\el

More generally, suppose that $g$ is equal to its Taylor series near
the origin.  Even this doesn't cover all possible cases (in
particular, if $g$ is smooth but not real-analytic), but it seems to
be the most general case for which there is a satisfactory theory.  In
the case where $g$ is equal to its Taylor series near the origin,
Arnold and his collaborators (see \cite{Arnold} and the references
therein) have given a powerful
analysis of the resulting asymptotics.  Briefly, if $g$ is
real-analytic with a unique minimum of zero at the origin then the
leading term in the expansion (assuming $f(0)\neq 0$) is of the form
$cf(0)t^{\alpha}|\log t|^m$ where $c$ is a positive constant, $\alpha$
is a positive rational, and $m$ is an integer between $0$ and $n-1$
inclusive.  Estimates on $\alpha$ and $m$ can be given in terms of
combinatorial information derived from which monomials in the Taylor series
of $g$ have non-zero coefficients (more precisely, one looks at
various features of the Newton diagram of $g$).  Moreover, generically
(in a sense which can be made precise) $\alpha$ and $m$ are determined
by this combinatorial information.

The above assumes that $g$ has an isolated minimum at the origin.
Suppose, instead, that $g$ assumes its minimum along some smooth
submanifold.  In this case, one can choose coordinates for the
minimum set and then extend them to coordinates near the minimum set
by adding coordinates for the normal bundle.  Then at each point of
the minimum set, one can try to apply the above analysis to the
corresponding fiber of the normal bundle, and then attempt to
integrate the result over the minimum set.  The simplest such case is
when $g$ is a Morse-Bott function, in which case the asymptotics on
each fiber will be just those corresponding to a non-degenerate
Hessian of the appropriate dimension (this is what we see, for
example, for the Heisenberg group in Section \ref{s:examples}),
although in general the situation can be more complicated.

In the case when this is not
possible (for example, if the minimum set has a more complicated
structure than a submanifold), a somewhat more general statement can
be made.  If $g$ is
real-analytic, one can use a resolution of singularities to reduce the
situation to that of a sum of integrals of the form given in Equation
\eqref{Eqn:LaplaceIntegral}, where in each term of the sum $g$ is a
monomial in the new coordinates and $f$ is a smooth function times the
absolute value of a monomial.  (Essentially, the resolution of
singularities amounts to a type of generalized change of coordinates
under which $g$ has this more restricted form.)  The small-time
asymptotics in such a case are again given by a rational power of $t$
times an integer power of $|\log t|$.  In contrast to the above case
of an isolated minimum, here there does not seem to be a way of
understanding the powers of $t$ and $|\log t|$ without determining the
resolution of singularities and computing the asymptotics of each of
the resulting integrals.

The interested reader is referred to the references above for complete
details, or to Sections 3.5 and 3.6 of \cite{Neel} which contain a
more detailed summary of these results (and which seems too much of a
digression to repeat here).

\subsection{Conjugacy and the behavior of $h_{x,y}$}

We now discuss how the behavior of $h_{x,y}$ near its minima relates
to the structure of the minimal geodesics from $x$ to $y$,
specifically, to the conjugacy of these geodesics.  Suppose we have
distinct points $x$ and $y$ such that every minimizer from $x$ to $y$
is strongly normal. We begin by introducing notation.

Consider any point $z_0\in \Gamma$, which corresponds to some minimal
geodesic $\gamma$ from $x$ to $y$.  Then there is a unique covector
$\lambda\in T^*_{x}M$ such that 
$\Exp_x(2\lambda)=y$ and that $\Exp_x(2\lambda,t)$ for
$t\in[0,1]$ parametrizes $\gamma$.   (Recall that
$\Exp_x(\lambda)=\pi\circ e^{\vec{H}}(x,\lambda)$.)

Let $\lambda(s)$ be a smooth curve of covectors $\lambda:(-\eps, \eps)
\rightarrow T^{*}_x M$ (for some small $\eps>0$) such that
$\lambda(0)=\lambda$ and the derivative never vanishes.  Thus
$\lambda(s)$ is a one-parameter family of perturbations of $\lambda$
which realizes the first-order perturbation $\lambda^{\prime}(0) \in
T_{\lambda}\lp T^{*}_x M\rp$.    Also, we let $z(s)= \Exp_x
(\lambda(s))$, so that $z(0)=z_0$.  Because $\Exp_x$ is a
diffeomorphism from a neighborhood of $\lambda$ to a neighborhood of
$z_0$, we see that the derivative of $z(s)$ also never vanishes.  Thus
$z(s)$ is a curve which realizes the vector $z^{\prime}(0)\in
T_{z_0}M$.  Further, we've established an isomorphism of the vector
spaces $T_{\lambda}\lp T^{*}_x  M\rp$ and $T_{z_0}M$ by mapping
$\lambda^{\prime}(0)$ to $z^{\prime}(0)$, except that we've excluded
the origin by insisting that both vectors are non-zero.

We say that $\gamma$ is conjugate in the direction
$\lambda^{\prime}(0)$ (or with respect to the perturbation
$\lambda^{\prime}(0)$) if $\frac{d}{ds}\Exp(2\lambda(s))|_{s=0} =0$.
Note that this only depends on $\lambda^{\prime}(0)$.  We say that the
Hessian of $h_{x,y}$ at $z_0$ is degenerate in the direction
$z^{\prime}(0)$ if $\frac{d^2}{ds^2}h_{x,y}(z(s))|_{s=0} =0$.  This
last equality is equivalent to writing the Hessian of $h_{x,y}$ as a
matrix in some smooth local coordinates, applying it as a quadratic
form to $z^{\prime}(0)$ expressed in these coordinates, and getting
zero.  This equivalence, as well the fact that whether the result is
zero or not depends only on $z^{\prime}(0)$, follows from the fact
that $z_0$ is a critical point of $h_{x,y}$.

The point of the the next theorem is that conjugacy in the direction
$\lambda^{\prime}(0)$ is equivalent to degeneracy in the direction
$z^{\prime}(0)$.  Thus the Hessian of $h_{x,y}$ encodes information
about the conjugacy of $\gamma$, and it is a more geometric object
than it might seem at first.

\begin{THM}\label{THM:HAndConj}
Let $M$ be a  complete sub-Riemannian
manifold in the sense of Section \ref{s:srg} or Appendix \ref{appendix}, and let $x$ and $y$ be distinct
points such that every minimal geodesic from $x$ to $y$ is strongly
normal.  Define $\Gamma$, $z_0\in \Gamma$, $h_{x,y}$ and the curves $\lam(s), z(s)$ as above.
Then  
\bi
\iii[(i)]  $\gamma$ is conjugate
if and only if the Hessian of
$h_{x,y}$ at $z_0$ is degenerate. 
\iii[(ii)] In particular $\gamma$ is
conjugate in the direction $\lambda^{\prime}(0)$ if and only if the
Hessian of $h_{x,y}$ at $z_0$ is degenerate in the corresponding
direction $z^{\prime}(0)$. 
\iii[(iii)] The dimension of the space of
perturbations for which $\gamma$ is conjugate is equal to the dimension of the kernel of the
Hessian of $h_{x,y}$ at $z_0$. 
\ei
\end{THM}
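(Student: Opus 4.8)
The plan is to rephrase everything in terms of the linearized Hamiltonian flow along $\gamma$ — which plays the role of Jacobi fields in the sub-Riemannian setting — and then to run a ``broken Jacobi field'' argument in the spirit of Morse theory. Throughout I fix $z_0\in\Gamma$, the associated minimal geodesic $\gamma$, and the covector $\lambda$ with $\Exp_x(2\lambda)=y$ and $z_0=\Exp_x(\lambda)$; by a \emph{Jacobi field along $\gamma$} I mean the differential of $e^{t\vec{H}}$ applied to a vertical vector, ``vanishing at $x$'' meaning the vector is vertical at $(x,2\lambda)$, and ``vanishing at $y$'' meaning it projects to $0$ at $t=1$. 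If several minimal geodesics join $x$ and $y$, the argument below is applied to each $z_0\in\Gamma$ and its geodesic separately.

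First I would establish the dictionary between Hessians and this flow. On the neighbourhood of $z_0$ on which $E(x,\cdot)=\tfrac12 d^2(x,\cdot)$ is smooth — produced by the construction preceding Theorem~\ref{THM:MainExpansion} — its differential is the terminal-covector field: $d_zE(x,\cdot)$ is the $T^*M$-component of $e^{\vec{H}}(x,\sigma_x(z))$, where $\sigma_x$ is the local inverse of $\Exp_x$ near $\lambda$ (this is the first-variation formula for the energy, see \cite{nostrolibro}). Writing $e^{\vec{H}}(x,\cdot)=(Q_x,P_x)$ in canonical coordinates near $\lambda$, this gives $\mathrm{Hess}_{z_0}E(x,\cdot)=(dP_x)(dQ_x)^{-1}$, which is meaningful precisely because $dQ_x|_\lambda=d\Exp_x|_\lambda$ is invertible; and this invertibility is exactly the non-conjugacy of the first half of $\gamma$, which holds here since $\Exp_x$ is a local diffeomorphism near $\lambda$ by the discussion in Section~\ref{s:laplace1}. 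Unwinding the homogeneity of $\vec{H}$ identifies $\mathrm{Hess}_{z_0}E(x,\cdot)$, up to a positive constant $c$, with the operator $\mathcal{B}_x\colon T_{z_0}M\to T^*_{z_0}M$ sending $w$ to the momentum at $z_0$ of the unique Jacobi field along $\gamma$ vanishing at $x$ with value $w$ at $z_0$ (uniqueness from non-conjugacy of the first half; symmetry of $\mathcal{B}_x$ from the fact that these Jacobi fields form a Lagrangian subspace). Running the same computation from $y$, and tracking the orientation reversal of the second half (the anti-symplectic involution $p\mapsto -p$), yields $\mathrm{Hess}_{z_0}E(\cdot,y)=-c\,\mathcal{B}_y$ with the \emph{same} $c$, where $\mathcal{B}_y$ is the momentum operator of Jacobi fields along $\gamma$ vanishing at $y$. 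The sign is forced by $d_{z_0}E(x,\cdot)=-d_{z_0}E(\cdot,y)$ and is consistent with $\mathrm{Hess}_{z_0}h_{x,y}\geq 0$ from Lemma~\ref{Lem:AboutGamma}.

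Since $h_{x,y}=E(x,\cdot)+E(\cdot,y)$ we get $\mathrm{Hess}_{z_0}h_{x,y}=c\,(\mathcal{B}_x-\mathcal{B}_y)$. Hence $w\in\ker\mathrm{Hess}_{z_0}h_{x,y}$ iff $\mathcal{B}_x(w)=\mathcal{B}_y(w)$, i.e.\ iff the unique Jacobi field on the first half of $\gamma$ vanishing at $x$ with value $w$ at $z_0$ and the unique Jacobi field on the second half vanishing at $y$ with value $w$ at $z_0$ have equal momentum at $z_0$, hence concatenate to a genuine (unbroken) Jacobi field $V$ along $\gamma$ with $V(x)=V(y)=0$; moreover $V\neq 0$ whenever $w\neq 0$, since a Jacobi field vanishing at $x$ and at $z_0$ vanishes on the whole first half. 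Thus $w\mapsto V$ is a linear isomorphism from $\ker\mathrm{Hess}_{z_0}h_{x,y}$ onto the space of Jacobi fields along $\gamma$ vanishing at both endpoints, with inverse $V\mapsto V(z_0)$. On the other hand, for $\xi\in T^*_xM$ the field $V_\xi(t):=\der{\eps}{0}\Exp_x\!\big(t(2\lambda+\eps\xi)\big)$ is a Jacobi field along $\gamma$ vanishing at $x$, with $V_\xi(1)=d\Exp_x|_{2\lambda}(\xi)$ and $V_\xi(\tfrac12)=\tfrac12 d\Exp_x|_\lambda(\xi)$, and $\xi\mapsto V_\xi$ is an isomorphism onto the Jacobi fields vanishing at $x$ carrying $\ker d\Exp_x|_{2\lambda}$ onto those vanishing at both endpoints. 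Now (ii) follows: with $\xi=\lambda'(0)$ and $z(s)=\Exp_x(\lambda(s))$ we have $\der{s}{0}\Exp_x(2\lambda(s))=2\,d\Exp_x|_{2\lambda}(\xi)$ and $z'(0)=d\Exp_x|_\lambda(\xi)$, so $\gamma$ is conjugate in the direction $\lambda'(0)$ iff $\xi\in\ker d\Exp_x|_{2\lambda}$ iff $V_\xi$ is a (nonzero, as $d\Exp_x|_\lambda$ is invertible) Jacobi field vanishing at both endpoints iff $V_\xi(\tfrac12)=\tfrac12 z'(0)\in\ker\mathrm{Hess}_{z_0}h_{x,y}$ iff the Hessian of $h_{x,y}$ at $z_0$ is degenerate in the direction $z'(0)$. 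Part (i) is the existential version of (ii), and (iii) follows by counting dimensions through the two isomorphisms above: $\dim\ker\mathrm{Hess}_{z_0}h_{x,y}$ equals the dimension of the space of Jacobi fields along $\gamma$ vanishing at both endpoints, which equals $\dim\ker d\Exp_x|_{2\lambda}$, the dimension of the space of perturbations for which $\gamma$ is conjugate.

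The step I expect to be the main obstacle is the dictionary of the second paragraph. Because there is no Levi-Civita connection in the sub-Riemannian setting, ``Jacobi field'', ``momentum at $z_0$'', and their relation to $\mathrm{Hess}_{z_0}E(x,\cdot)$ and $\mathrm{Hess}_{z_0}E(\cdot,y)$ have to be set up via the symplectic linearization of the Hamiltonian flow on $T^*M$ in canonical coordinates; in particular one must pin down the orientation-reversal sign so that the kernel condition becomes exactly the ``matching momenta'' (broken Jacobi field) condition, and so that the resulting quadratic form is positive semidefinite as required by Lemma~\ref{Lem:AboutGamma}. Once this is in place, the remaining steps are routine linear algebra together with standard facts about the exponential map and Jacobi fields.
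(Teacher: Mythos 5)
Your proposal is correct, and it rests on the same fundamental identity as the paper's proof --- namely that $dh_{x,y}|_{z}=\tilde{\lambda}+\tilde{\eta}$ is the sum of the terminal covectors of the two geodesic halves, so that degeneracy of the Hessian is a statement about the first-order matching of those covectors under perturbation --- but you develop that identity through a noticeably heavier and more structural formalism. The paper stays at the level of one-parameter families: it observes that conjugacy in the direction $\lambda^{\prime}(0)$ is equivalent to $\tilde{\lambda}(s)+\tilde{\eta}(s)=O(s^2)$ directly from the local invertibility of the exponential map based at $z(s)$, and then passes between the vanishing of $\frac{d^2}{ds^2}h_{x,y}(z(s))|_{s=0}$ (its definition of directional degeneracy) and the vanishing of $Hv$ via the elementary fact that $\ip{Av}{v}=0$ iff $Av=0$ for symmetric positive semi-definite $A$. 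You instead build the full operator identity $\Hes_{z_0}h_{x,y}=c\,(\mathcal{B}_x-\mathcal{B}_y)$ via the symplectic linearization and characterize the kernel by the broken-Jacobi-field gluing condition, recovering $\ker d\,\Exp_x|_{2\lambda}$ by the chain of isomorphisms. What your route buys is the classical Morse-theoretic picture (the kernel of the Hessian is literally the space of Jacobi fields vanishing at both endpoints), which makes part (iii) transparent and would generalize to index computations; what it costs is precisely the dictionary you flag as the main obstacle --- the identification $\Hes_{z_0}E(x,\cdot)=(dP_x)(dQ_x)^{-1}$, the Lagrangian-graph symmetry argument, and the orientation-reversal sign --- all of which the paper sidesteps by never writing the Hessians as operators. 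One small bookkeeping point: your chain of equivalences terminates at ``$z^{\prime}(0)\in\ker\Hes_{z_0}h_{x,y}$'', whereas the theorem's notion of degeneracy in a direction is the vanishing of the quadratic form on that direction; these coincide exactly by the positive semi-definiteness you note at the end of your second paragraph, so you should make that one-line reduction explicit rather than leave it implicit.
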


\begin{proof}
We know that there is a unique shortest geodesic from $y$ to $z(s)$
for all $s\in(-\eps,\eps)$, assuming $\eps$ small enough.  Let
$\eta(s)$ be the corresponding smooth curve of covectors in $T^{*}_y M$
(that is, $\Exp_y(t \eta(s))$ for $t\in[0,1]$ parametrizes the
minimal geodesic from $y$ to $z(s)$).  Let $\tilde{\lambda}(s)$ and $\tilde{\eta}(s)$ be
the images of $\lambda$ and $\eta$, respectively, under the corresponding
Hamiltonian flow on the cotangent bundle.  
We see that $\tilde{\lambda}(0)+ \tilde{\eta}(0)=0$.

Observe that $d(E(x,\cdot))|_{z(s)} =
\tilde{\lambda}(s)$ and $d(E(y,\cdot))|_{z(s)}
= \tilde{\eta}(s)$. (Here $d$ stands for the differential.) It follows that
\[
dh_{x,y}|_{z(s)} = \tilde{\lambda}(s) +\tilde{\eta}(s) \quad \lp \in
T^{*}_{z(s)}M\rp .
\]
Next note that $\gamma$ is conjugate in the direction
$\lambda^{\prime}(0)$ if and only if $\tilde{\lambda}(s)
+\tilde{\eta}(s) = O(s^2)$, as follows directly from consideration of
the exponential map.  Thus $\gamma$ is conjugate in the direction
$\lambda^{\prime}(0)$ if and only if $dh_{x,y}|_{z(s)}=O(s^2)$.

We claim that $dh_{x,y}|_{z(s)}$ is $O(s^2)$ if and only if
$h_{x,y}(z(s))$ is $O(s^3)$.  Equivalently, the derivative of
$dh_{x,y}$ (as a one-form) in the $z^{\prime}(0)$ direction is zero if
and only if its pairing with $z^\prime(0)$ is zero.  This relationship
is most easily expressed in local coordinates.  Let $H$ be the
$n\times n$ matrix for the Hessian of $h_{x,y}$ at $z_0$ in some local
coordinates, and let $v$ be $z^{\prime}(0)$ expressed in these
coordinates.  Then the derivative of $dh_{x,y}$ in the $z^{\prime}(0)$
direction is $Hv$, which we think of an operator on vectors $u$ by
$\ip{Hv}{u}$ where is the standard Euclidean inner product for these
coordinates, or equivalently by $u^THv$ where $u$ and $v$ are written
as column-vectors.

The claim now follows from the following simple fact from linear
algebra: for any symmetric and positive semi-definite $n\times n$ real matrix $A$, we have that, for any $x\in\bR^n$,
$\ip{Ax}{x}=0$ if and only if $Ax=0 \in\bR^n$, where
$\ip{\cdot}{\cdot}$ is the standard Euclidean inner product. 
Because $A$ is symmetric, we can find an
orthonormal basis 
$v_i$, $i=1,\ldots, n$ for $\R^n$ cosnisting of
 eigenvectors of $A$ with
corresponding eigenvalues $\lambda_i\geq 0$.  
Then, writing $x=\sum_{i=1}^n x_i v_i$, the above fact follows from the identities
$Ax=\sum_{i=1}^n \lam_i x_i$ and $\la Ax,x \ra=\sum_{i=1}^n \lam_i x_i^2$.

Since the Hessian of $h_{x,y}$ at $z_0$
is symmetric and positive semi-definite, the claim follows.  Thus we
have proven statement $(ii)$ in the theorem (and $(i)$ a fortiori), namely that $\gamma$
is conjugate in the direction $\lambda^{\prime}(0)$ if and only if the
Hessian of $h_{x,y}$ at $z_0$ is degenerate in the direction
$z^{\prime}(0)$.

Statement $(iii)$ is an immediate consequence of $(ii)$ plus the fact that the correspondence between $\lambda^{\prime}(0)$ and $z^{\prime}(0)$ gives an isomorphism of
vector spaces between $T_{\lambda}\lp T^{*}_x  M\rp$ and $T_{z_0}M$,
as discussed just before the theorem.
\end{proof}

We now briefly discuss the situation of higher-order derivatives of
the exponential map and higher-order derivatives if $h_{x,y}$.  This
situation is more complicated than what we just saw for lower-order
derivatives.

Recall that $\Exp_x (2\lambda) = y$.  Further, consider
$$
\frac{d^m}{ds^m} \Exp_x (2\lambda(s))|_{s=0} .
$$
For $m=1$, this is zero if and only if $y$ is conjugate to $x$ along
the geodesic through $z_0$ in the direction of $\lambda^{\prime}(0)$.
If this first derivative is zero, then the number of higher order
derivatives which vanish describes, in a sense, how conjugate $y$ is
to $x$ with respect to the perturbation $2\lambda(s)$.  (Of course,
it's possible for all derivatives to vanish; for example, if
$2\lambda(s)$ describes a one-parameter family of minimal geodesics
from $x$ to $y$, as occurs for the Heisenberg group.)  We can compare
the vanishing of these derivatives to the vanishing of the derivatives
$\frac{d^k}{ds^k}h_{x,y}(z(s))$.

Suppose that, for some positive integer $m$, we have that $\Exp_x
(2\lambda(s))=w s^m +O(s^{m+1})$ for some non-zero $w\in\bR^n$ in some
(smooth) system of coordinates around $y$ (so that $y$ is at the
origin of these coordinates).  If this holds in one such system, in
holds in any other such system with $w$ re-expressed in the new
coordinates.  Because the exponential map is a diffeomorphism from a
neighborhood of each $\tilde{\eta}(s)\in T^{*}_{z(s)}M$ to a
neighborhood of $y$, we see that this expansion for  $\Exp_x
(2\lambda(s))$ is equivalent to having
\[
\tilde{\lambda}(s)+\tilde{\eta}(s) = dh_{x,y}|_{z(s)} = vs^m +
O(s^{m+1})\in T^{*}_{z(s)}M
\]
 for some non-zero one-form $v$ written with respect to some (smooth)
system of coordinates around $z_0$.  Again, if this holds for one such
system, it holds for any other such system with $v$ re-expressed
relative to the new coordinates.

Thus, the one-form $dh_{x,y}|_{z(s)}$ vanishes to the same order as
the derivatives of $\Exp_x (2\lambda(s))$.  However, when we look at
$h_{x,y}(z(s))$, we see that
\begin{align*}
h_{x,y}&(z(s)) - h_{x,y}(z_0) = \int_0^s dh_{x,y}|_{z(t)} (z^{\prime}(t)) \, dt=\\
 &= \int_0^s \lp v(z^{\prime}(t)) t^m+O(t^{m+1}) \rp  \, dt 
  = \frac{1}{m+1} v(z^{\prime}(0)) s^{m+1}+O(s^{m+2}) .
\end{align*}
So we have that $h_{x,y}(z(s)) - h_{x,y}(z_0) = c s^{m+1}+O(s^{m+2})$
for non-zero $c$ if and only if $v(z^{\prime}(0))\neq 0$.  For $m=1$,
it is always the case that $v(z^{\prime}(0))\neq 0$, as we saw in the
previous theorem.  However, for $m>1$, this need not be true.  In such
a case we can only conclude that $h_{x,y}(z(s)) - h_{x,y}(z_0) =
O(s^{m+2})$, and the exact order of vanishing of the derivatives of
$h_{x,y}$ is unclear in general.

\brem In the special case when $M$ is two-dimensional and $z_0$ is an isolated minimum such that $h_{x,y}$ vanishes to finite order at $z_0$, the relationship is simpler. Namely, because the Hessian of $h_{x,y}$ is clearly non-degenerate along the direction of $\gamma$, we can apply Lemma \ref{Lem:Splitting} to write $h_{x,y} = u_1^2 +g(u_2)$ for some coordinates $u_i$ around $z_0$ and some smooth function $g$.  Then if $z(s)$ corresponds to the curve $(u_1,u_2)=(0,s)$, we see by direct computation that $v(z^{\prime}(0))\neq 0$. In this way, the degree of degeneracy of the Hessian and the degree of conjugacy correspond precisely in this case.
\erem

We also note that, at the opposite extreme, there is again a nice
correspondence between the behavior of the exponential map and of
$h_{x,y}$.  Namely, $\Exp_x (2\lambda(s))=y$ for all
$s\in(-\eps,\eps)$ if and only if $h_{x,y}(z(s)) = h_{x,y}(z_0)$ for
all $s\in(-\eps,\eps)$, as follows directly from Lemma
\ref{Lem:AboutGamma}.

All of this seems to indicate that, loosely speaking, the more
conjugate the geodesic through $z_0$ is, the more degenerate $h_{x,y}$
is at $z_0$.  However, it also seems that looking at curves through
$z_0$ corresponding to one-parameter perturbations of the geodesic is
too naive in general, and that a more sophisticated approach is needed to
describe the exact relationship between the higher order derivatives
of the exponential map and higher order terms in the Taylor series of
$h_{x,y}$.  As we do not need anything beyond the results of Theorem
\ref{THM:HAndConj} in what follows (except perhaps to give geometric
intuition to $h_{x,y}$), we do not pursue this direction any further.
(In light of the above, it seems that the claims about higher-order
derivatives in Lemma 3.1 of \cite{Neel} are over-simplified.
Fortunately, in that paper, as in the present, only the content of
Theorem \ref{THM:HAndConj} is used in subsequent arguments.  The
higher-order relationship serves only to provide a more geometric
meaning to the behavior of $h_{x,y}$.)

\section{General consequences of Laplace asymptotics}
\label{s:conslapl}

We are now in a position to see what the theory of Laplace asymptotics
summarized in the previous section gives when applied to the integral
in Theorem \ref{THM:MainExpansion}.  The ideas expand upon those of
Section 5.3 of \cite{Hsu}, where inequality
\eqref{Eqn:DoubleSidedBounds} of Theorem \ref{THM:ImprovedExpansion}
is given in the Riemannian case.  We note that the results we give in
this section, most interestingly those which depend on whether or not
$x$ and $y$ are conjugate along a given geodesic, are also valid in
the Riemannian case.

We begin with a basic lemma.  For this lemma, we say that the $u_i$
are ``coordinates around $z_0$'' if they are coordinates on some
neighborhood of $z_0$ such that $u_i(z_0)=0$ for all $i$.
Inequalities for such coordinates are understood to hold on some such
neighborhood.

\begin{lemma}
Under the assumptions of Theorem \ref{THM:HAndConj}, let $z_0$ be
any point of $\Gamma$.  Then there exist smooth coordinates
$u_1,\ldots, u_n$ around $z_0$ such that
\bqn\label{eq:E} 
h_{x,y}(u_1,\ldots,u_n) \geq \frac{1}{4}d^{2}(x,y) + u_1^2.
\eqn
Also, there exist smooth coordinates
$v_1,\ldots, v_n$ around $z_0$ such that
\[
h_{x,y}(v_1,\ldots,v_n) \leq \frac{1}{4}d^{2}(x,y)+ v_1^2 +\cdots + v_n^2 .
\]
Finally, if the geodesic from $x$ to $y$ passing through $z_0$ is
conjugate, then the $v_i$ can be chosen so that
\[
h_{x,y}(v_1,\ldots,v_n) \leq \frac{1}{4}d^{2}(x,y)+ v_1^2 +\cdots + v_{n-1}^2 +v_n^4.
\]
%(possibly on a smaller neighborhood of $z_0$ than before).
\end{lemma}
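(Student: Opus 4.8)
The plan is to treat the three inequalities separately. In all cases set $f := h_{x,y} - \frac{1}{4}d^{2}(x,y)$ and record that, under the hypotheses of Theorem~\ref{THM:HAndConj}, $d(x,\cdot)$ and $d(\cdot,y)$ — hence $h_{x,y}$ — are smooth near $z_{0}$, that $f\geq 0$ with $f(z_{0})=0$ (Lemma~\ref{Lem:AboutGamma}), that $df(z_{0})=0$ since $z_{0}$ is a minimum, and that $H:=d^{2}f(z_{0})=d^{2}h_{x,y}(z_{0})$ is symmetric positive semidefinite. The constant $\frac{1}{4}d^{2}(x,y)$ is inert (constant in $z$), so I only need to produce coordinates in which $f$ satisfies the stated bound. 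The first two bounds are soft; the conjugate case is the real point.

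For the lower bound \eqref{eq:E} I would argue purely metrically. With $b=d(x,z)$, $c=d(z,y)$, $2a=d(x,y)$, the identity $\frac{1}{2}(b^{2}+c^{2})=\frac{1}{4}(b+c)^{2}+\frac{1}{4}(b-c)^{2}$ together with the triangle inequality $b+c\geq 2a$ gives $f(z)\geq \psi(z)^{2}$, where $\psi:=\frac{1}{2}\big(d(x,\cdot)-d(\cdot,y)\big)$ is smooth near $z_{0}$, vanishes at $z_{0}$, and has nonzero differential there: along the arclength minimizer $\gamma$ through $z_{0}$ one has $\psi(\gamma(s))=s-a$, so $d\psi(z_{0})[\dot\gamma]=1$. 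Completing $\psi$ to a smooth chart $(u_{1},\dots,u_{n})$ with $u_{1}=\psi$ then finishes it. For the general upper bound I would simply Taylor-expand: in any smooth chart $y$ centred at $z_{0}$, $f(y)=\frac{1}{2}y^{T}Hy+O(|y|^{3})\leq C|y|^{2}$ near $z_{0}$ for some $C>0$, and rescaling $v_{i}:=\sqrt{C}\,y_{i}$ gives $f\leq v_{1}^{2}+\cdots+v_{n}^{2}$.

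For the conjugate case I would start from Theorem~\ref{THM:HAndConj}(i), which says $H$ is degenerate, so its rank is $r\leq n-1$. Since $H\geq 0$, I would apply the splitting lemma — Lemma~\ref{Lem:Splitting} when $r=n-1$, and its general form from \cite{gromollmeyer} (which does not require an isolated critical point) otherwise — to get smooth coordinates $(p,q)\in\R^{r}\times\R^{n-r}$ near $z_{0}$ with $f=|p|^{2}+\hat f(q)$, where $\hat f$ is smooth with $\hat f(0)=d\hat f(0)=0$ and $d^{2}\hat f(0)=0$, and $\hat f\geq 0$ (set $p=0$). The key elementary observation is that a nonnegative smooth function with vanishing value, gradient and Hessian at $0$ must vanish there to order at least $4$ (its cubic Taylor form is odd, hence would otherwise be negative somewhere), so $\hat f(q)\leq C_{0}|q|^{4}$; writing $q=(q',q_{n-r})$ with $q'\in\R^{n-r-1}$ and bounding $|q|^{4}=(|q'|^{2}+q_{n-r}^{2})^{2}\leq 2|q'|^{4}+2q_{n-r}^{4}\leq 2|q'|^{2}+2q_{n-r}^{4}$ (once $|q'|\leq 1$) yields $f\leq|p|^{2}+2C_{0}|q'|^{2}+2C_{0}q_{n-r}^{4}$, which a linear rescaling of the $q'$-coordinates and of $q_{n-r}$ turns into $f\leq v_{1}^{2}+\cdots+v_{n-1}^{2}+v_{n}^{4}$ (with $p$ or $q'$ absent in the degenerate cases $r=0$, $r=n-1$). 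I expect the main obstacle to be precisely this last step: one must use a splitting lemma valid at a possibly non-isolated critical point — $\Gamma$, and thus the zero set of $f$, may be positive-dimensional — and one must exploit the nonnegativity of $f$ to push the vanishing order of $\hat f$ from $3$ up to $4$, which is exactly what creates room for the single quartic coordinate $v_{n}$.
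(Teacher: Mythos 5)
Your proof is correct, and for the first two inequalities it is essentially the paper's argument up to cosmetic choices: the paper takes $u_1=d(x,\cdot)-d(x,y)/2$ and uses the triangle inequality exactly as you do (your symmetric choice $\psi=\tfrac12\lp d(x,\cdot)-d(\cdot,y)\rp$ works equally well, since both distance functions are smooth on $\GE$), and the general upper bound is the same Taylor-plus-rescaling step. The genuine divergence is in the conjugate case when the Hessian has corank at least $2$. The paper avoids the higher-corank splitting lemma by adding a nonnegative smooth $\phi$, vanishing to second order at $z_0$, so that $h_{x,y}+\phi$ has Hessian of corank exactly $1$; it then applies Lemma \ref{Lem:Splitting} (whose conclusion $\psi(u_n)=O(u_n^4)$ already encodes the fact that a local minimum kills the cubic term) and discards $\phi\geq 0$ at the end. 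You instead invoke the full Gromoll--Meyer splitting $f=|p|^2+\hat f(q)$, prove directly that nonnegativity of $\hat f$ together with a vanishing $2$-jet forces a vanishing $3$-jet, hence $\hat f=O(|q|^4)$, and then distribute $|q|^4$ among quadratic bounds on all but one coordinate and a quartic bound on the last. Both routes are valid. Yours requires the general-corank splitting lemma but makes explicit where the quartic order comes from, and you are right to insist on a version of the splitting valid at non-isolated critical points: $\Gamma$ can be positive-dimensional (e.g.\ in the Heisenberg group), so the hypothesis of Lemma \ref{Lem:Splitting} that the origin be the only critical point is not literally satisfied, and the paper's appeal to that lemma tacitly relies on the same relaxation. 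The paper's $\phi$-trick buys a reduction to the corank-$1$ statement only; your direct argument buys the explicit bound $\hat f\leq C_0|q|^4$ without perturbing $h_{x,y}$.
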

\begin{proof}

For $z$ in some neighborhood of $z_0$, let $u_1(z)=u_1=
d(x,z)-(d(x,y)/2)$.  If the neighborhood is small enough, this is a
smooth function with non-vanishing derivative (since $d(x,z)$ has
these properties as a function of $z$) and $u_1(z_0)=0$.  Thus it is a
valid coordinate, and we can complete this to a full set of
coordinates around $z_0$.  Further, the triangle inequality gives
\[
d(y,z) \geq d(x,y)-d(x,z) = \frac{1}{2}d(x,y)-u_1 .
\]
Thus we compute
\begin{align*}
h_{x,y}&(z) = \frac{1}{2}\lb d(x,z)^2 + d(y,z)^2\rb\\ &
\geq \frac{1}{2}\lb \lp u_1+\frac{1}{2}d(x,y)\rp^2 +\lp
\frac{1}{2}d(x,y)-u_1\rp^2  \rb
= u_1^2 +\frac{1}{4}d(x,y)^2 .
\end{align*}
which gives the estimate \eqref{eq:E}.

For the second estimate, recall that $h_{x,y}$ is smooth and assumes
its minimum at $z_0$, and thus the derivative of $h_{x,y}$ vanishes at
$z_0$.  It follows that for any system of coordinates $w_i$ around
$z_0$, there is a small enough neighborhood of $z_0$ and positive
constant $C$ such that
\[
h_{x,y}(v_1,\ldots,v_n) \leq \frac{1}{2}E(x,y)+ C\lp w_1^2 +\cdots + w_n^2\rp
\]
on this neighborhood.  Thus, we can simply rescale the $w_i$ to get
coordinates $v_i$ as required.

The proof of the final inequality is based on Lemma \ref{Lem:Splitting}.

Because the geodesic through $z_0$ is conjugate, the Hessian of
$h_{x,y}$ at $z_0$ cannot have full rank, as we see from Theorem \ref{THM:HAndConj}.  First assume that the rank is exactly $n-1$.  Then Lemma \ref{Lem:Splitting}
shows that there are
coordinates $v_i$ around $z_0$ such that
\[
h_{x,y}(v_1,\ldots,v_n) = \frac{1}{2}E(x,y)+ v_1^2 +\cdots + v_{n-1}^2 +O(v^4_n)
\]
Then, after possibly rescaling $v_n$, we see that the desired inequality holds.

Next assume the Hessian of $h_{x,y}$ has rank less than $n-1$.  Then
let $\phi$ be a smooth function on a neighborhood of $z_0$ which is
non-negative, zero at $z_0$ (hence with vanishing derivative at
$z_0$), and such that $h_{x,y}+\phi$ has Hessian of rank $n-1$ at $z_0$
($\phi$ looks like a sum of squares of an appropriate number of
coordinates, for example).  Applying the previous result to
$h_{x,y}+\phi$ shows that there are coordinates $v_i$ around $z_0$ such
that
\[
\lp h_{x,y}+\phi\rp (v_1,\ldots,v_n) \leq \frac{1}{2}E(x,y)+ v_1^2
+\cdots + v_{n-1}^2 +v_n^4 .
\]
Since $\phi$ is non-negative, the desired estimate for $h_{x,y}$ follows.
\end{proof}

These estimates allow us to say more about the integral appearing in
Theorem \ref{THM:MainExpansion}.

\begin{THM}\label{THM:ImprovedExpansion}\label{t:mainmain}
With the same assumptions and notation as Theorem
\ref{THM:MainExpansion}, we have that for any sufficiently small neighborhood $N(\Gamma)$
\bqn \label{eq:neww}
  \qquad p_t(x,y) = \int_{\GE} \lp\frac{2}{t}\rp^{n} e^{-h_{x,y}(z)/t}\lp
c_0(x,z)c_0(z,y)+O(t)\rp \, \mu(dz) .
\eqn
Again, the ``$O(t)$'' term in the integral is uniform over $\GE$.
Also, there exist positive constants $C_i$, and $t_0$ (depending on
$M$, $x$, and $y$) such that
\begin{equation}\label{Eqn:DoubleSidedBounds}
\frac{C_1}{t^{n/2}}  e^{-d^2(x,y)/4t} \leq p_t(x,y)
\leq\frac{C_2}{t^{n-(1/2)}}  e^{-d^2(x,y)/4t}
\end{equation}
for $0<t<t_0$.
Further, if $x$ and $y$ are conjugate along any minimal geodesic
connecting them, then (perhaps after changing $t_0$), we have
\bqn\label{eq:stima2}
p_t(x,y)\geq \frac{C_3}{t^{(n/2)+(1/4)}}  e^{-d^2(x,y)/4t}
\eqn
for $0<t<t_0$.  Finally, if $x$ and $y$ are not conjugate along any
minimal geodesic joining them, then
\bqn \label{eq:stima3}
p_t(x,y) = \frac{C_4+O(t)}{t^{n/2}}  e^{-d^2(x,y)/4t} .
\eqn
\end{THM}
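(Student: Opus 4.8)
\medskip
\noindent\emph{Proof proposal.} The plan is to read off all four assertions from Theorem~\ref{THM:MainExpansion}, the coordinate estimates for $h_{x,y}$ in the preceding Lemma, and the Laplace asymptotics recalled in Section~\ref{s:laplace2}. First I would dispose of the error term in Theorem~\ref{THM:MainExpansion} and at the same time prove \eqref{eq:neww} and the lower bound in \eqref{Eqn:DoubleSidedBounds}. Fix $z_0\in\Gamma$; by the second estimate of the Lemma there are coordinates $v_1,\dots,v_n$ centered at $z_0$ with $h_{x,y}\le\tfrac14 d^2(x,y)+v_1^2+\dots+v_n^2$ on a small box $B\subset\GE$. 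On $B$, for $t$ small, $c_0(x,z)c_0(z,y)+O(t)$ is bounded below by a positive constant and $\mu$ has a positive smooth density; since $e^{-h_{x,y}(z)/t}\ge e^{-d^2(x,y)/4t}e^{-(v_1^2+\dots+v_n^2)/t}$ and $\int_B e^{-(v_1^2+\dots+v_n^2)/t}\,dv\asymp t^{n/2}$, the integral in Theorem~\ref{THM:MainExpansion} is at least $C_1 t^{-n/2}e^{-d^2(x,y)/4t}$. Because $E(x,y)/2=d^2(x,y)/4$, the additive term $o(\exp[(-E(x,y)/2-\delta)/t])=o(e^{-d^2(x,y)/4t}e^{-\delta/t})$ is smaller than $t^M$ times this integral for every $M$, so it can be absorbed into the uniform $O(t)$ inside the integrand; this gives \eqref{eq:neww}, and the displayed lower bound is exactly the left inequality of \eqref{Eqn:DoubleSidedBounds}.

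For the right inequality of \eqref{Eqn:DoubleSidedBounds} I would use \eqref{eq:E}: around each $z_0\in\Gamma$ there are coordinates $u_1,\dots,u_n$ and a neighborhood $V_{z_0}$ on which $h_{x,y}\ge\tfrac14 d^2(x,y)+u_1^2$. Since $\Gamma$ is compact, finitely many $V_{z_1},\dots,V_{z_N}$ cover it, and because \eqref{eq:neww} holds for every sufficiently small $\GE$ I may take $\GE\subset\bigcup_i V_{z_i}$. On each $V_{z_i}$ the (nonnegative) integrand of \eqref{eq:neww} is dominated by $(2/t)^n e^{-d^2(x,y)/4t}e^{-u_1^2/t}$ times a bounded function and a bounded density, over a bounded range of $u$; integrating in $u_1$ costs $O(t^{1/2})$ and the remaining coordinates contribute $O(1)$, so each piece is $O(t^{-(n-1/2)}e^{-d^2(x,y)/4t})$. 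Summing over $i$ gives $p_t(x,y)\le C_2 t^{-(n-1/2)}e^{-d^2(x,y)/4t}$.

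For \eqref{eq:stima2}, suppose $x$ and $y$ are conjugate along some minimal geodesic, and let $z_0\in\Gamma$ be its midpoint. The third estimate of the Lemma (which relies on Lemma~\ref{Lem:Splitting}) provides coordinates $v_i$ around $z_0$ with $h_{x,y}\le\tfrac14 d^2(x,y)+v_1^2+\dots+v_{n-1}^2+v_n^4$. Running the lower-bound argument again on a small box, with $e^{-h_{x,y}/t}\ge e^{-d^2(x,y)/4t}e^{-(v_1^2+\dots+v_{n-1}^2)/t}e^{-v_n^4/t}$, the $v_1,\dots,v_{n-1}$ integral is $\asymp t^{(n-1)/2}$ while $\int e^{-v_n^4/t}\,dv_n\asymp t^{1/4}$ (the $m=2$ instance of the one-dimensional Laplace formula of Section~\ref{s:laplace2}); together with the prefactor $(2/t)^n$ this yields $p_t(x,y)\ge C_3 t^{-(n/2+1/4)}e^{-d^2(x,y)/4t}$.

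Finally, for \eqref{eq:stima3}, assume no minimal geodesic from $x$ to $y$ is conjugate. By Theorem~\ref{THM:HAndConj}(i) the Hessian of $h_{x,y}$ at each $z_0\in\Gamma$ is then nondegenerate, and being a Hessian at a minimum it is positive definite; hence each $z_0$ is an isolated minimum, so $\Gamma$ is compact and discrete, thus finite. Around each $z_0\in\Gamma$ the Morse lemma furnishes coordinates with $h_{x,y}=\tfrac14 d^2(x,y)+v_1^2+\dots+v_n^2$; folding the Jacobian and the density of $\mu$ into a smooth positive function $F$ with $F(0)=c_0(x,z_0)c_0(z_0,y)\cdot(\text{density at }z_0)>0$, the Gaussian Laplace expansion (the case $m_i=1$ of \eqref{Eqn:DiagonalG}) gives $\int e^{-(v_1^2+\dots+v_n^2)/t}(F(v)+O(t))\,dv=\pi^{n/2}t^{n/2}(F(0)+O(t))$. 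Multiplying by $(2/t)^n e^{-d^2(x,y)/4t}$ and summing over the finitely many points of $\Gamma$ produces \eqref{eq:stima3} with $C_4=\sum_{z_0\in\Gamma}2^n\pi^{n/2}F(0)>0$. I expect the main obstacle to be precisely this last case: one must exclude a complicated structure for $\Gamma$, and the crucial input is the implication, via Theorem~\ref{THM:HAndConj}, that absence of conjugacy forces the Hessian of $h_{x,y}$ to be nondegenerate and hence $\Gamma$ finite with a clean Gaussian contribution at each midpoint; what remains is careful bookkeeping of the uniform error terms and of the reduction to finitely many coordinate charts compatible with the freedom in the choice of $\GE$.
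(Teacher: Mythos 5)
Your proposal is correct and follows essentially the same route as the paper's proof: the lower bound via the sum-of-squares upper estimate on $h_{x,y}$ and a Gaussian Laplace integral (which also absorbs the exponentially small error term of Theorem \ref{THM:MainExpansion} into the $O(t)$ to give \eqref{eq:neww}), the upper bound via the single-coordinate estimate $h_{x,y}\geq \frac14 d^2(x,y)+u_1^2$ and a finite cover of the compact set $\Gamma$, the conjugate case via the quartic degeneration supplied by Lemma \ref{Lem:Splitting}, and the non-conjugate case via Theorem \ref{THM:HAndConj} plus the Morse lemma forcing $\Gamma$ to be finite. The only cosmetic difference is that you deduce \eqref{eq:neww} from the lower bound alone rather than from the two-sided bound as the paper does, which is a harmless simplification.
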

\brem
Notice that Corollary \ref{t:main} in the Introduction is a direct
consequence of formulas \eqref{eq:neww} and \eqref{Eqn:DiagonalG}.
Corollary \ref{corollarioMAIN} contains the estimates
\eqref{Eqn:DoubleSidedBounds}, \eqref{eq:stima2} and
\eqref{eq:stima3}.
\erem
\begin{proof}  We begin with the general bounds on $p_t(x,y)$.  Choose
any $z_0\in \Gamma$ and let $V\subset \GE$ be some neighborhood on
which there are coordinates $v=(v_1,\ldots,v_n)$ as in the previous
lemma (that is, $h_{x,y}$ is estimated by the sum of squares of the
$v_i$).  Since the integrand in Theorem \ref{THM:MainExpansion} is
positive for sufficiently small $t$, we have that
\begin{multline*}
\int_{\GE} \lp\frac{2}{t}\rp^{n} e^{-h_{x,y}(z)/t}\lp
c_0(x,z)c_0(z,y)+O(t)\rp \, \mu(dz)
\\ \geq
\lp\frac{2}{t}\rp^{n} e^{-E(x,y)/2t}  \int_{V} e^{-\lp
v_1^2+\cdots+v_n^2\rp/t}\lp
c_0(x,v)c_0(v,y)+O(t)\rp \, \mu(dv)
\end{multline*}
for all sufficiently small, positive $t$.

Because $\mu$ is a smooth volume and $v$ a smooth coordinate system,
we know that there is a smooth, positive function $F$ such that
$\mu(dv)= F(v)dv_1\cdots dv_n$.  Then the results of the previous
section, namely Equation \eqref{Eqn:DiagonalG}, show that
\begin{multline*}
\int_{V} e^{-\lp v_1^2+\cdots+v_n^2\rp/t}\lp
c_0(x,v)c_0(v,y)+O(t)\rp  F(v) \, dv_1\cdots dv_n \\
= t^{n/2}\lb F(0)\lp c_0(x,z_0)c_0(z_0,y)+O(t)\rp\pi^{n/2}+O(t)\rb
\end{multline*}
(where we've used that $\Gamma(1/2)=\sqrt{\pi}$).  Note that there's
no difficulty handling the $O(t)$ in the integrand since we simply
estimate it by $|O(t)|\leq Ct$ for some positive $C$ and factor the
$t$ out of the integral.  Putting this together with the fact that
$F(0)c_0(x,z_0)c_0(z_0,y)$ is positive, we see that there exist
positive $C_1$ and $t_0$ such that
\[
\int_{\GE} \lp\frac{2}{t}\rp^{n} e^{-h_{x,y}(z)/t}\lp
c_0(x,z)c_0(z,y)+O(t)\rp \, \mu(dz) \geq \frac{C_1}{t^{n/2}}  e^{-E(x,y)/2t}
\]
for $0<t<t_0$.  Comparing this to Theorem \ref{THM:MainExpansion}, we
note that the $o\lp \exp\lb \frac{-E(x,y)/2 -\delta}{t} \rb \rp$ term
is dominated by the right-hand side of the above inequality.  Thus,
after possibly adjusting $C_1$ and $t_0$, we see that the relevant
inequality in the theorem holds.

For the other side of the first inequality, note that we can find
coordinates $u_i$ as in the previous lemma around every point of
$\Gamma$, and each of these systems of coordinates is defined on some
open neighborhood.  Because $\Gamma$ is compact, there is a finite set
of such neighborhoods which cover $\Gamma$; denote them by
$U_1,\ldots, U_m$ and the corresponding systems of coordinates by
$u_j=(u_{j,1},\ldots,u_{j,n})$ for $j=1,\ldots,m$.  Now choose $\GE$
small enough so that $\GE\subset \cup_{j=1}^m U_j$.  Then we have
\begin{multline*}
\int_{\GE} \lp\frac{2}{t}\rp^{n} e^{-h_{x,y}(z)/t}\lp
c_0(x,z)c_0(z,y)+O(t)\rp \, \mu(dz)
\\ \leq \sum_{j=1}^m
\lp\frac{2}{t}\rp^{n} e^{-E(x,y)/2t}  \int_{U_j} e^{-u_{j,1}^2/t}\lp
c_0(x,u_j)c_0(u_j,y)+O(t)\rp \, \mu(du_j)
\end{multline*}
for all sufficiently small, positive $t$.  As above, $\mu$ is a smooth
volume, and Equation \eqref{Eqn:DiagonalG} gives that, for each $j$,
there is a positive constant $K_j$ such that
\[
\int_{U_j} e^{-u_{j,1}^2/t}\lp c_0(x,u_j)c_0(u_j,y)+O(t)\rp \,
\mu(du_j) = \sqrt{t}\lp K_j +O(t)\rp.
\]
Summing $j$ from $1$ to $m$ allows us to conclude that there exists
positive $C_2$ such that, after possibly making $t_0$ smaller,
\[
\int_{\GE}\hspace{-0.2cm} \lp\frac{2}{t}\rp^{n} \hspace{-0.2cm}e^{-h_{x,y}(z)/t}\lp
c_0(x,z)c_0(z,y)+O(t)\rp \, \mu(dz) \leq \frac{C_2}{t^{n-(1/2)}}  e^{-E(x,y)/2t}
\]
for $0<t<t_0$.  Again, comparing this to Theorem
\ref{THM:MainExpansion}, we see that the other side of the first
inequality in the theorem holds, after possibly adjusting $C_2$ and
$t_0$.

The two-sided inequality we've just proved now shows that the term $o\lp
\exp\lb \frac{-E(x,y)/2 -\delta}{t} \rb \rp$ in Theorem
\ref{THM:MainExpansion} is unnecessary; it can be ``included'' in the
$O(t)$ term in the integral (as we've already taken advantage of
above).  This establishes the first claim in the theorem.

Now we consider the case when $x$ and $y$ are conjugate along some
minimal geodesic.  Suppose that $z_0$ is the midpoint of this
geodesic.  Then we can find coordinates $v_i$ around $z_0$, defined on
some neighborhood $V\subset \GE$, such that
\[
h_{x,y}(v_1,\ldots,v_n) \leq \frac{1}{2}E(x,y)+ v_1^2 +\cdots +
v_{n-1}^2 +v_n^4 .
\]
Analogous to the previous lower bound, we have that
\begin{multline*}
\int_{\GE} \lp\frac{2}{t}\rp^{n} e^{-h_{x,y}(z)/t}\lp
c_0(x,z)c_0(z,y)+O(t)\rp \, \mu(dz)
\\ \geq
\lp\frac{2}{t}\rp^{n} e^{-\frac{E(x,y)}{2t}}  \int_{V} e^{-\lp
v_1^2+\cdots+v_{n-1}^2+v_n^4\rp/t}\lp
c_0(x,v)c_0(v,y)+O(t)\rp \, \mu(dv),
\end{multline*}
for all sufficiently small, positive $t$.  Equation
\eqref{Eqn:DiagonalG} (along with smoothness of $\mu$ and positivity
of the $c_0$) then shows that, for some positive constants $C_3$ and
$t_0$ (possibly different from before),
\begin{align*}
\int_{V} e^{-\lp v_1^2+\cdots+v_{n-1}^2+v_n^4\rp/t}\lp
c_0(x,v)c_0(v,y)+O(t)\rp \, \mu(dv)\\ \geq t^{(n/2)-(1/4)} \lp
C_3+O\lp\sqrt{t}\rp\rp,
\end{align*}
for $0<t<t_0$.  Combining these estimates and the first claim in the
theorem, we see that, after possibly adjusting $C_3$ and $t_0$,
\[
p_t(x,y)\geq \frac{C_3}{t^{(n/2)+(1/4)}}  e^{-E(x,y)/2t},
\]
for $0<t<t_0$.

Finally, we suppose that $x$ and $y$ are not conjugate along any
minimal geodesic joining them.  Then for any $z_0\in\Gamma$, Theorem
\ref{THM:HAndConj} and the Morse lemma imply that $z_0$ is isolated.
Since
$\Gamma$ is compact, we see that in fact $\Gamma$ consists of finitely
many points, say $z_1,\ldots, z_m$ (so there are only finitely many
minimal geodesics from $x$ to $y$).  Further, we can find coordinates
$u_{j,1},\ldots, u_{j,n}$ around each $z_j$, on some neighborhood
$U_j$, such that
\[
h_{x,y}(u_{j,1},\ldots,u_{j,n}) = \frac{1}{2}E(x,y)+ u_{j,1}^2 +\cdots
+ u_{j,n}^2 \quad\text{on $U_j$,}
\]
and $\GE$ is the disjoint union of the $U_j$ (for small enough
$U_j$).  Thus, using the first claim in the theorem,
\begin{align}\nn
p_t(x,y) = \lp\frac{2}{t}\rp^{n} & e^{-E(x,y)/2t}
 \sum_{j=1}^m  \int_{U_j} e^{-(u_{j,1}^2+\cdots+u_{j,n}^2)/t}  \times \\
&\times c_0(x,u_j)c_0(u_j,y)+O(t)  \, \mu(du_j) .
\end{align}
We have that $\mu(du_j)=F_j(u_j)du_{j,1}\cdots du_{j,n}$ for smooth,
positive $F_j$.  As above, we compute
\begin{multline*}
\int_{U_j} e^{-\lp u_{j,1}^2+\cdots+u_{j,n}^2\rp/t}\lp
c_0(x,u_j)c_0(u_j,y)+O(t)\rp  F_j(u_j) \, du_{j,1}\cdots du_{j,n} \\
= t^{n/2}\lb F_j(0)\lp c_0(x,z_j)c_0(z_j,y)+O(t)\rp\pi^{n/2}+O(t)\rb .
\end{multline*}
Summing over $j$, we have
\[
p_t(x,y) = \frac{C_4+O(t)}{t^{n/2}}  e^{-E(x,y)/2t},
\]
where $C_4 = \lp 4\pi\rp^{n/2}\sum_{j=1}^m F_j(0)
c_0(x,z_j)c_0(z_j,y)$, which is clearly positive.
\end{proof}

One consequence of this result is that the exponent of $1/t$ in the
small-time expansion of $p_t(x,y)$ ``sees'' whether or not $x$ and $y$
are conjugate along any minimal geodesic.  Said differently, the
exponent of $t$ detects the part of the cut locus of $x$ which comes
from conjugacy (assuming that the necessary geodesics are strictly
normal, of course).  That naturally leads to the question of what
happens at cut points which are not conjugate.

We first note that, if $y$ is not in the cut locus of $x$, then the
results of this analysis fit nicely with the expansion of Ben Arous,
which applies in a neighborhood of $y$.  In this case, there is a
single minimal geodesic from $x$ to $y$ and it is not conjugate.  Let
$z_1$ be the midpoint.  Then the same analysis as in the last part of
the previous proof (just with $m=1$) shows that
\[
p_t(x,y) =   \lp F(z_1) c_0(x,z_1)c_0(z_1,y)+O(t) \rp \frac{\lp
4\pi\rp^{n/2}}{t^{n/2}}  e^{-E(x,y)/2t} ,
\]
where $F(z_1)$ is the density of $\mu$ with respect to coordinates
which make the Hessian of $h_{x,y}$ at $z_1$ the identity matrix.
Since the Ben Arous expansion applies to $p_t(x,y)$, we also have
\[
p_t(x,y) =\lp c_0(x,y)+O(t) \rp \frac{1}{t^{n/2}}  e^{-E(x,y)/2t} .
\]
So in this case, Theorem \ref{THM:ImprovedExpansion} provides a
relationship between $c_0(x,y)$ on the one hand, and $c_0(x,z_1)$,
$c_0(z_1,y)$, and second-order behavior of $h_{x,y}$ at $z_1$ (which
is encoded by $F(z_1)$) on the other.

Now suppose that $y$ is in the cut locus of $x$, but that none of the
minimal geodesics from $x$ to $y$ are conjugate (and the assumptions
of Theorem \ref{THM:ImprovedExpansion} hold, of course).  Let
$\gamma_1(s)$ be one such geodesic, parametrized by arc-length so that
$\gamma_1(0)=x$ and $\gamma_1(d(x,y))=y$.  Then we claim that
$\lim_{s\nearrow d(x,y)}c_0(x,\gamma_1(s))$ exists and is positive,
and we denote it $\alpha_1$.  This follows from the relationship
between $c_0(x,\gamma_1(s))$ and $c_0(x,\gamma_1(s/2))$,
$c_0(\gamma_1(s/2),y)$, and $F(\gamma_1(s/2)$ just discussed, and that
fact that these last three quantities are continuous in $s$ and remain
positive.  (Indeed, we've already seen in the proof of Theorem
\ref{THM:ImprovedExpansion} that $\alpha_1=\lp 4\pi\rp^{n/2}F(z_1)
c_0(x,z_1)$ $c_0(z_1,y)$ where $z_1=\gamma_1(d(x,y)/2)$.)  Alternatively,
one can think of lifting a neighborhood of $\gamma_1([0,s])$ to a
``local'' universal cover and then applying the Ben Arous expansion.

Continuing, we let $\gamma_2,\ldots,\gamma_m$ be the other minimal
geodesics from $x$ to $y$, where we know that there can only be
finitely many and that $m$ must be at least $2$.  We let $\alpha_2,
\ldots,\alpha_m$ be the associated limits of $c_0(x,\cdot)$ along
these geodesics, analogous to $\alpha_1$.  Then the final part of the
proof of Theorem \ref{THM:ImprovedExpansion} shows that
\[
p_t(x,y) = \lb \sum_{j=1}^m \alpha_j +O(t)\rb \frac{1}{t^{n/2}}
e^{-E(x,y)/2t} .
\]

The point of relating the coefficient of $t^{-n/2}  e^{-d^{2}(x,y)/4t}$
in the above to the $c_0$ along the $\gamma_j$ is that we see that
this coefficient is discontinuous at $y$.  That is, for any
$\gamma_j$, we know that $c_0(x,\gamma_j(s))$ is continuous in a
neighborhood of $\gamma_j(s)$ as long as $0<s<d(x,y)$.  However, when
$s$ increases to $d(x,y)$, the value of this coefficient ``jumps up''
to the sum of the $\alpha_j$.  Thus, points which are not in the cut
locus of $x$ and points that are but are not conjugate to $x$ along
any minimal geodesics both have small-time heat kernel expansions that
look like a constant times $t^{-n/2}  e^{-d^{2}(x,y)/4t}$.  These two
types of points can be distinguished by whether or not the coefficient
(the constant) is continuous at the point in question.  However, if
one has that much information about the small-time heat kernel
asymptotics in a neighborhood of a point $y$, then presumably one
already understands $d(x,\cdot)$ near $y$, from which one should be
able to understand the local structure of the cut locus.  Thus looking
at this coefficient, from the perspective of locating the cut locus,
seems unlikely to be of much help.

This potentially stands in contrast to the case when $y$ is conjugate
to $x$ along a minimal geodesic, in which case only the power of $t$
appearing in the expansion at the point $y$ needs to be determined (in
order to conclude that $y$ is conjugate to $x$ along a minimal
geodesic).

\section{Examples} \label{s:examples}
In this section we discuss our results in some examples of 2-step sub-Riemannian structures. In these cases, an integral expression of the heat kernel (which can be explicitly written in some cases) has been found in \cite{bealsgaveaugreiner}. 

In the first example, namely the Heisenberg group, we briefly compute the Hessian of the hinged energy function $h_{x,y}$ when $x$ is the origin and $y$ is a point on the cut locus. In this case, being that both  the optimal synthesis and the heat kernel known explicitly, we verify the results of Theorem \ref{t:mainmain}.

The second example is the free nilpotent sub-Riemannian structure with growth vector (3,6). 
Here we use a ``reverse'' argument, starting from the formula for the heat kernel to find the asymptotics for points belonging to the vertical subspace, where all points are both cut and conjugate. This asymptotic agrees with the fact there exists a one parameter family of optimal geodesics that reach this point (for a detailed discussion about the optimal synthesis see \cite{nilpotent36}).

In this section the heat kernel is meant  for the intrinsic sub-Laplacian, i.e.\ it is computed with respect to the Popp volume. For the cases treated in this section this volume is proportional to the left Haar measure and is proportional to the Lebesgue measure in the standard system of coordinates we are using.

\subsection{Formula for the heat kernel in the 2-step case} \label{s:gaveau}
In this section we recall the expression of the heat kernel of the intrinsic sub-Laplacian associated with a  2-step nilpotent structure, that has been found in \cite{bealsgaveaugreiner}. 
Then we rewrite it to have a convenient expression on the \virg{vertical subspace}.

Consider on $\R^n$ a 2-step nilpotent structure of rank $k<n$, where $X_{1},\ldots,X_{k}$ is an orthonormal frame. Once a smooth complement $\mc{V}$ for the distribution is chosen (i.e.\ $T_{q}\R^n=\distr_{q}\oplus \mc{V}_{q}$, for all $q\in \R^n$) we can complete an orthonormal frame to a global one $X_{1},\ldots,X_{k},Y_{1},\ldots,Y_{m}$, where $m=n-k$ and $\mc{V}_{q}=\tx{span}_{q}\{Y_{1},\ldots,Y_{m}\}$. Since the structure is nilpotent, we can assume that the only nontrivial commutation relations are
\bqn \label{eq:B}
[X_{i},X_{j}]=\sum_{h=1}^{m}b_{ij}^{h}Y_{h},
\eqn
where $B_{1},\ldots,B_{m}$ defined by $B_{h}=(b_{ij}^{h})$ are skew-symmetric matrices (see \cite{corank2} for the role of these matrices in the exponential map).

Due to the group structure, the intrinsic sub-Laplacian takes the form of sum of squares $\lapl=\sum_{i=1}^{k}X_{i}^{2}$ (see Remark \ref{r:group}). The group structure also implies that the heat kernel is invariant with respect to the group operation
hence it is enough to consider the heat kernel $p_{t}(0,q)$ starting from the identity of the group, which we also denote $p_{t}(q)$. The heat kernel is written as follows (see again \cite{bealsgaveaugreiner,lanconellibook})
\bqn\label{eq:gaveau}\nn
\ \ p_{t}(q)=\frac{2}{(4\pi t)^{Q/2}} \int_{\R^{m}} V(B(\tau)) \exp \left(-\frac{ W(B(\tau)) x\cdot x}{4t}\right) \cos \left(\frac{z \cdot \tau}{t}\right) d\tau,
\eqn
where  $q=(x,z)$, $x\in \R^k, z\in \R^m$, and $B(\tau):=\sum_{i=1}^{m} \tau_{i}B_{i}$. Moreover  $V: \R^{n\times n} \to \C$ and $W: \R^{n\times n} \to  \R^{n\times n}$ are the matrix functions defined by
$$V(A)=\sqrt{\det \left(\frac{A}{\sin A}\right)}, \qquad W(A)= \frac{A}{\tan A}.$$
Here $Q$ is the Hausdorff dimension of the sub-Riemannian structure.

Notice that \eqref{eq:gaveau} differs by some constant factors from the formulas contained in  \cite{bealsgaveaugreiner} since there the heat kernel is the solution of the equation $\partial_t=\frac12 \lapl$. 
\brem
Assume that the real skew-symmetric matrix $B(\tau)$ is diagonalizable and denote by $\pm i \lam_{j}(\tau)$, for $j= 1,\ldots \ell$, its non zero eigenvalues. Then we have the formula for the expansion on the \virg{vertical subspace} (i.e.\ where $x=0$)
\bqn \label{eq:z}
p_{t}((0,z))=\frac{2}{(4\pi t)^{Q/2}} \int_{\R^{m}} \prod_{j=1}^{\ell} \frac{\lam_{j}(\tau)}{\sinh \lam_{j}(\tau)}\cos \left(\frac{z \cdot \tau}{t}\right) d\tau.
\eqn
\erem
\subsection{The Heisenberg group}\label{s:h}
The Heisenberg group is the simplest example of sub-Riemannian
manifold. It is defined by the orthonormal frame
$\distr=\tx{span}\{X_{1},X_{2}\}$ on  $\R^{3}$ (with coordinates
$(x,y,z)$) defined by
$$X_{1}=\partial_{x}-\frac{y}{2}\partial_z, \qquad
X_{2}=\partial_{y}+\frac{x}{2} \partial_z.$$
Defining $Z=\partial_{z}$, we have the commutation relations
$[X_{1},X_{2}]=Z$ and $[X_{1},Z]=[X_2,Z]=0.$
Denote by
$\EXP_{0} :\Lambda_{0}\times \R^{+} \to M$ the exponential map starting from the origin, where
$$\Lambda_{0}=\{p_{0}=(\theta,w)\in T^{*}_0M\, |\, \theta\in S^1, \ w\in \R\}.$$
For every $p_0=(\theta,w)\in \Lambda_{0}$ with $|w|\neq0$, the
arclength geodesic $\g(t)=\EXP_0(p_0,t)=(x(t),y(t),z(t))$ associated with the
initial covector $p_{0}$
is described by the equations
\begin{align} \label{eq:exp1}
x(t)&=\frac{1}{ w}(\cos ( w t +\theta) - \cos
\theta),\notag \\
y(t)&=\frac{1}{w}(\sin ( w t +\theta) - \sin\theta),\\
z(t)&=\frac{1}{2w^2}(wt- \sin w t ). \notag
\end{align}
and  is optimal up to its cut time $\tcut=2\pi/w$, with $\g(\tcut)=(0,0,\pi/w^2)$.
If $w=0$, the geodesic is a straight line
contained in the $xy$-plane and $\tcut=+\infty$.

From these properties it follows that the cut locus starting from the origin coincides with the $z$-axis, and for every point $\zeta=(0,0,z)$ in this set we have
$d^2(0,\zeta)=4\pi |z|$.
 
\brem
The expression of the heat kernel $p_t(q)$ for the Heisenberg group is
well known and was first computed by Gaveau \cite{gaveau} and Hulanicki \cite{hulanicki}.
The integral formula for $p_t$ can be directly recovered from
\eqref{eq:gaveau} since in this case there is a single skew-symmetric
matrix $B$
$$B=\begin{pmatrix}
0&1\\ -1&0
\end{pmatrix}, \qquad \tx{eig}(B(\tau))=\{\pm \, i \tau\}.$$
Hence it follows
\bqn
\hspace{0.5cm} p_{t}(0,q)=\frac{2}{(4\pi t)^2}\int_{-\infty}^{\infty}\frac{\tau}{\sinh\tau}
\exp\left(-
\frac{x^2+y^2}{4 t} \frac{\tau}{\tanh\tau}
\right)\cos\left(\frac{z\tau}{t}\right) d\tau. \label{eq:heispt} \nn
\eqn
On the vertical axis the integral can be explicitly computed
\begin{align*}
p_{t}(0,\zeta)&=\frac{2}{(4\pi t)^2}\int_{-\infty}^{\infty}\frac{\tau}{\sinh\tau} \cos\left(\frac{z\tau}{t}\right) d\tau=\frac{1}{8 t^2} \frac{1}{1+\cosh \left(\frac{\pi  z}{t}\right)}.
\end{align*}
Hence, using that $d^2(0,\zeta)=4\pi z$ we have
\begin{align} \label{eq:lequazione}
p_{t}(0,\zeta)&=\frac{1}{t^2} \exp\left(-\frac{\pi z}{t}\right) \psi(t) 
=\frac{1}{t^2} \exp\left(-\frac{d^2(0,\zeta)}{4t}\right) \psi(t),
\end{align}
where $\psi(t)$ is a smooth function of $t$, nonvanishing at 0. (Here $z$ is fixed.)
 \erem
 
In what follows, we recover the expansion \eqref{eq:lequazione} computing the expansion of the hinged energy function and applying Corollary \ref{t:main}.
For reasons of symmetry it is not restrictive to consider only  points $\hat{\zeta}=(0,0,\hat{z})$ such that $\hat{z}>0$ (the on-diagonal expansion
is a different situation).

The set of
minimal geodesics joining $0$ to $\hat{\zeta}=(0,0,\hat{z})$ is parametrized by the covectors $p_0=(\theta,\hat{w})$ where $\theta\in S^1$, $\hat z=\pi/\hat w^2$. For each $p_0$, the associated geodesic $\g_{p_0}$ satisfies $\g_{p_0}(0)=0$ and $\g_{p_0}(2\pi/\hat w)=\hat \zeta$.
Further, we have that the set of midpoints $\Gamma$ is characterized as follows
$$\Gamma=\EXP_0\lp S^1, \hat w, \frac{\pi}{\hat w}\rp=\lc \lp x,y,\frac{\hat z}{2}\rp :
x^2+y^2=2/\hat{w} \rc.$$ 

We introduce cylindrical coordinates $(\rho,\phi,z)$, where $x=\rho \cos \phi, \ y=\rho \sin \phi$. 
We have that $(t,\theta,w)$ forms a smooth coordinate system on $\GE$ (for
$\GE$ small), where $t$ represents the distance from the origin. Because of the invariance with respect to rotation around the $z$ axis, to compute the Hessian of the hinged energy function $h_{0,\hat{\zeta}}$
we are left to study the relationship
between $(\rho,z)$ and $(t,w)$ near $\Gamma$. We have
\[
\rho(t,w)=
\frac{2}{w}\sin\lp\frac{wt}{2}\rp, \quad\text{and}\quad
z(t,w) = \frac{1}{2w^2}\lp wt-\sin wt\rp .
\]
Recall that
\[
 h_{0,\hat{\zeta}}(\rho,z) = \frac{1}{2}\lp d^2(0,(\rho,z)) + d^2((\rho,z),\hat{\zeta}) 
\rp.
\]

Using that $t$ respresents the distance from the origin and exchanging the role of $0$ and $\hat \zeta$, one can get with some implicit differentiation for the matrix element of the Hessian of $h_{0,\hat \zeta}$ 
\bqn \label{eq:hessh} \nn
\left.\frac{\partial^2}{\partial z^2}
h_{0,\hat \zeta}(\rho,z)\right|_{\Gamma} =
2\hat{w}^2,\quad
\left.\frac{\partial^2}{\partial \rho^2}
h_{0,\hat \zeta}(\rho,z)\right|_{\Gamma} = \frac{\pi^2}{2} , \quad
\left.\frac{\partial^2}{\partial \rho \partial z}
h_{0,\hat \zeta}(\rho,z)\right|_{\Gamma} = 0.
\eqn

It follows that there exists a smooth change of coordinates
$(\rho,z)\mapsto (u,v)$ on a small disk perpendicular to $\Gamma$ (with
respect to the the usual $\bR^3$ metric) with the following three
properties.  First, $\Gamma$ corresponds to the set where $u$ and $v$
are both zero.  Second, $ h_{0,\hat \zeta}(u,v) =
\frac{\pi^2}{\hat{w}^2} + u^2+v^2$ on $\GE$.  Third,
$du=\frac{\pi}{2}d\rho$ on $\Gamma$ and $dv=\hat{w}dz$ on $\Gamma$. Applying Theorem
\ref{t:mainmain}  and keeping track of all the constants one gets
\[
p_t(0,\hat \zeta) =
\frac{1}{t^2}\exp\left( -\frac{\pi^2/\hat{w}^2}{t}\right)
\lp \frac{48\pi \lp c_0(0,\Gamma)\rp^2}{\hat{w}^2} +O(t) \rp .
\]
where $c_0(0,\Gamma)$ is the constant apearing in the Ben Arous expansion. Taking into account that $\hat z=\pi/\hat w^2$, the heat kernel decays like a constant times
$t^{-2}\exp(-4\pi \hat z / 4t)$, which agrees with what one obtains from
equation \eqref{eq:lequazione}.

\subsection{ (3,6) case} \label{s:36}
The free nilpotent Lie group $(3,6)$ is the sub-Rieman\-nian structure on $\R^{6}$ (with coordinates $(x_{1},x_{2},x_{3},z_{1},z_{2},z_{3})$) defined by the distribution $\distr=\tx{span}\{X_{1},X_{2},X_{3}\}$, where the vector fields 
$$X_{1}=\partial_{x_1}-\frac{1}{2}x_2\partial_{z_{3}}+\frac{1}{2}x_3\partial_{z_{2}},$$
$$X_{2}=\partial_{x_2}+\frac{1}{2}x_1\partial_{z_{3}}-\frac{1}{2}x_3\partial_{z_{1}},$$
$$X_{3}=\partial_{x_3}+\frac{1}{2}x_2\partial_{z_{1}}-\frac{1}{2}x_1\partial_{z_{2}},$$
define an orthonormal frame. If we set $Z_{i}=\partial_{z_{i}}$ for $i=1,2,3$ we have
$[X_{1},X_{2}]=Z_{3}$, $[X_{2},X_{3}]=Z_{1}$, and $[X_{3},X_{1}]=Z_{2}.$

In this case the matrices $B_{k}=(b_{ij}^{k})$ defined by the identities $[X_{i},X_{j}]=b_{ij}^{k}Y_{k}$ are 
$$B_{1}=\begin{pmatrix}
0&0&0\\0&0&1\\0&-1&0\\
\end{pmatrix},
\qquad
B_{2}=\begin{pmatrix}
0&0&1\\0&0&0\\-1&0&0\\
\end{pmatrix},
\qquad
B_{3}=\begin{pmatrix}
0&1&0\\-1&0&0\\0&0&0\\
\end{pmatrix}.$$
and for their linear combination
$B(\tau)=\sum_{j=1}^{3} \tau_{j}B_{j}$ we have $\tx{eig}(B(\tau))=\left \{0,\pm\,i|\tau|\right\},$ where
we denote by $|\cdot|$ the standard norm on $\R^{3}$. 

Using \eqref{eq:z} the explicit expression on the \virg{vertical} subspace, i.e. at a point $\zeta=(0,0,0,z_{1},z_{2},z_{3})$ 
%$\mc{V}=\{x_{1}=x_{2}=x_{3}=0\}$ 
is written as follows
\bqn \label{eq:heat36vert}
p_{t}(\zeta)=\frac{2}{(4\pi t)^{9/2}}\int_{\R^{3}} \frac{|\tau|}{\sinh |\tau|} \cos\left (\frac{\tau \cdot z}{t}\right)d\tau, 
%\quad .
\eqn
To compute the expansion of the heat kernel for $t\to 0$ we use the fact that \eqref{eq:heat36vert} is the Fourier transform of the radial function $f(\tau)=|\tau|/\sinh |\tau|$.

Recall that, if $F(x)=f(|x|)$ is a radial function defined on $\R^{m}$, its Fourier transform $\wh{F}(\xi)$ is itself a radial function,  i.e.\ it is defined by $\wh{F}(\xi)=g(|\xi|)$, where $g$ is the function of one variable that satisfies
$$g(\rho)=\frac{(2\pi)^{m/2}}{\rho^{\frac{m-2}{2}}}\int_{0}^{\infty} J_{\frac{m-2}{2}}(\tau \rho) \tau^{m/2}f(\tau) d\tau, \qquad \rho=|\xi|,$$
and $J$ denotes the Bessel function. In our case $m=3$, we have $J_{1/2}(s)=\sqrt{\frac{2}{\pi s}}\sin s$ and
$$g(\rho)=4\pi\int_{0}^{\infty} \frac{\sin \rho \tau}{\rho \tau}f(\tau) \tau^{2}d\tau,$$
Then we can rewrite our heat kernel as the 1-dimensional integral
\bqn
 \label{eq:heat36vert2} \nn
p_{t}(\zeta)=\frac{8\pi }{(4\pi t)^{9/2}}\int_{0}^{\infty} \frac{\tau^{2 }\sin \rho \tau}{\rho\, \sinh \tau}d\tau, 
\qquad \text{where}\qquad\rho=\frac{|z|}{t}.
\eqn
Using that
\bqn\nn
\int_{0}^{\infty} \frac{\tau^{2 }\sin \rho \tau}{\rho\, \sinh \tau}d\tau=\frac{2 \pi ^3 \sinh ^4\left(\frac{\pi  \rho}{2}\right) }{\rho\, \text{sinh}^3(\pi 
   \rho)}, \qquad \rho\in \R,
\eqn
we can explicitly write the expression of the heat kernel for $\zeta$ such that $|z|=1$
\bqn
 \label{eq:heat36vert3}
p_{t}(\zeta)=\frac{\sinh ^4\left(\frac{\pi }{2 t}\right) }{32 \sqrt{\pi } t^{7/2} \text{sinh}^3\left(\frac{\pi
   }{t}\right)}.
\eqn
From \eqref{eq:heat36vert3} one can immediately show that for such $\zeta$
\bqn\label{eq:36}
\lim_{t\to 0} t^{7/2}e^{\frac{\pi}{t}}p_t(\zeta)=C>0. 
\eqn
The following lemma is a direct consequence of Theorem \ref{THM:Leandre}:
\bl\label{l:36}
Assume that there exist $\alpha,K>0$, $t_{0}>0$  and constants $C_{1},C_{2}>0$ such that
\bqn \label{eq:c1c2}
\frac{C_{1}}{t^\alpha}e^{-\frac{K}{4t}} \leq p_t(x,y) \leq \frac{C_{2}}{t^\alpha}e^{-\frac{K}{4t}}, \qquad \all 0\leq t \leq t_{0}.
\eqn
Then $K=d^2(x,y)$.
\el
\begin{proof} Since log is a monotone function, we can apply $4t \log$  both inequalities in \eqref{eq:c1c2} and letting $t\to 0^+$ (which is allowed since the estimate is uniform for small $t$) we have
$\lim_{t\to 0^+} 4t \log p_t(x,y)=-K$
and the statement follows from Theorem \ref{THM:Leandre}. 
\end{proof} 
 
\bp Let $\zeta=(0,0,0,z_1,z_2,z_3)$ with $|z|=1$. Then $d^2(0,\zeta)=4\pi$ and the following asymptotic expansion holds
\bqn \nn
p_t(\zeta)=\frac{1}{t^{7/2}}\exp \lp-\frac{d^2(0,\zeta)}{4t}\rp \phi(t),
\eqn
where $\phi(t)$ is a smooth function nonvanishing at $t=0$. Moreover $\zeta$ is a conjugate point. 
\ep
\begin{proof}
This follows directly from \eqref{eq:36}, Lemma \ref{l:36} and Corollary 
\ref{corollarioMAIN}. 
\end{proof}
\brem 
From this analysis of the heat kernel and the homogeneity of the distance one gets the following information: 
(i). $d^2(0,\zeta)=4\pi|z|$ for every $\zeta=(0,0,0,z_1,z_2,z_3)$. (ii). The point $\zeta$ is reached from the origin by an optimal geodesic that at time $t=\sqrt{4 \pi |z|}$ is also conjugate. 

These facts were proved in \cite{nilpotent36} with a detailed analysis of the exponential map. 
(Notice that by symmetry is not difficult to prove that the point is conjugate to the origin along the geodesic. 
On the contrary, the difficulty is in proving that the geodesic does not lose optimality before the conjugate locus.) 
Our method via the analysis of the heat kernel provides a shorter proof.
\erem

\section{Grushin plane}
\label{s:gru}
The Grushin plane is the generalized sub-Riemannian structure  on $\R^2$ for which an orthonormal frame of vector fields is given by
\bqn \label{eq:gru}
X=\partial_x,\qquad Y=x\partial_y.
\eqn
Since $Y$ vanishes on the $y$-axis, this is a rank-varying sub-Riemannian structure and in particular is a 2-dimensional almost-Riemannian structure (see Appendix). One immediately verifies that the Lie bracket generating condition is satisfied since $[X,Y]=∂_y$.

In this section we compute the expansion of the heat kernel in the Grushin plane at a conjugate point, starting from a Riemannian point.

The interesting feature of this structure is that it provides an example of almost Riemannian geometry in which the geodesic flow is completely integrable by means of trigonometric functions and, at the same time, the conjugate locus has the same structure of the conjugate locus of a generic 2-dimensional Riemannian metric.

The sub-Riemannian Hamiltonian associated with the orthonormal frame \eqref{eq:gru} (in standard coordinates $\lam=(p_x,p_y,x,y)$ in $T^*\R^2$) is the smooth function
\bqn\label{eq:H}
H:T^*\R^2 \to \R, \qquad H(p_x,p_y,x,y)=\frac{1}{2}(p_x^2+x^2p_y^2).
\eqn
Since in this case there are no abormal minimizers (see \cite{agrachevboscainsigalotti})
the arclength geodesic flow starting from the Riemannian point $q_0=(-1,-\pi/4)$ is computed as the solution of the Hamiltonian system associated with $H$, with initial condition $(x_0,y_0)=(-1,-\pi/4)$ and $(p_x(0),p_y(0))=(\cos \theta, \sin \theta)$, where  $\theta\in S^1$. The exponential map $\EXP: \R_+\times S^1 \to \R^2$ starting from $q_0$, is computed as follows (we omit the base point $
q_0$ in the notation)
\begin{align}
\EXP(t,\theta)&=(x(t,\theta),y(t,\theta)),\nn \\[0.2cm]
x(t,\theta)&=-\frac{\sin (\theta-t \sin \theta)}{\sin \theta},  \label{eq:bauendi1}\\
y(t,\theta)&=-\frac{\pi}{4}+\frac{1}{4\sin \theta} \left(2t-2\cos \theta +\frac{\sin (2 \theta-2 t \sin \theta)}{\sin \theta}\right),\nn
\end{align}
with the understanding $\EXP(t,0)=\lim_{\theta \to 0}\EXP(t,\theta)=(t-1,-\pi/4)$.

\begin{figure}[h!]
  \centering
      \includegraphics[width=0.8\textwidth]{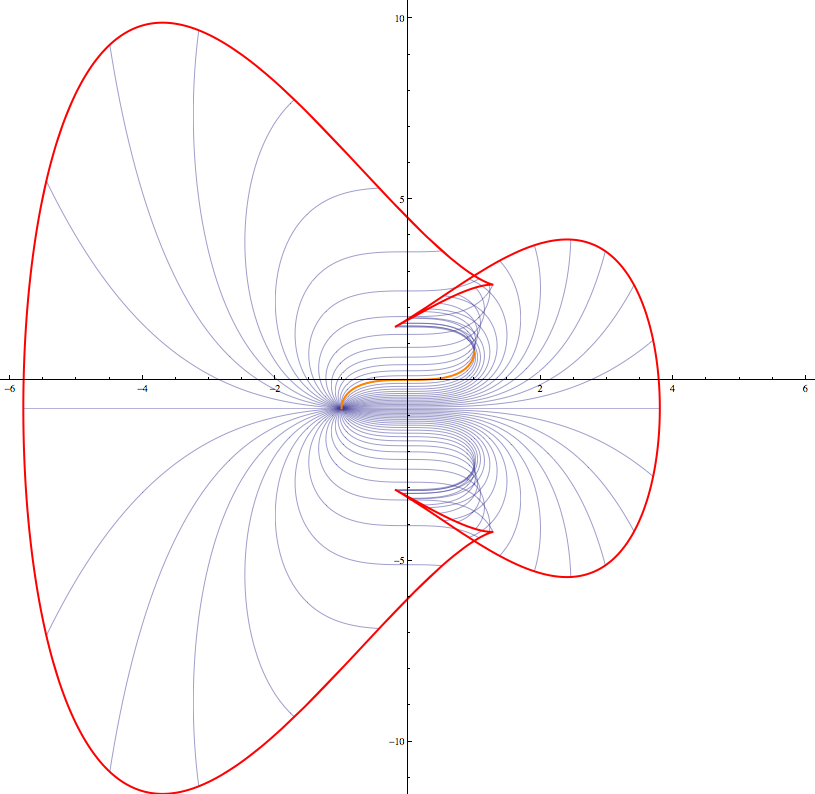}
  \caption{Geodesics starting from $q_0$}
  \label{fig:gru}
\end{figure}
 Let us consider the point $q_1=(1,\pi/4)$, the symmetric of $q_0$ with respect to the origin. 
The point $q_1$ is both a cut and a conjugate point from $q_0$. Indeed from the results of \cite{agrachevboscainsigalotti} immediately follows that the cut locus from $q_0$ is the set $\text{Cut}(q_0)=\{(1,\pi/4+s),s\geq0\}$. Moreover  
\bqn \label{eq:evvai}
\EXP(\pi,\pi/2)=q_1, \qquad \frac{d}{d\theta}\bigg|_{\theta=\pi/2}\EXP(\pi,\theta)=(0,0),
\eqn
shows that $q_1$ is also conjugate to $q_0$. Figure \ref{fig:gru} shows some geodesics starting from the point $q_{0}$ and the endpoints of all geodesics starting from $q_{0}$ at time $T=4.8$.
\brem 
Notice that
the geodesic with initial covector $\theta=\pi/2$ is the only one that reach $q_1$ optimally in time $T=\pi/2$. The midpoint of the geodesic is the origin $\EXP(\pi/4,\pi/2)=(0,0)$. (See also Figure \ref{fig:conj}.)
\erem
\begin{figure}[h!]  
 \centering
      \includegraphics[width=0.35\textwidth]{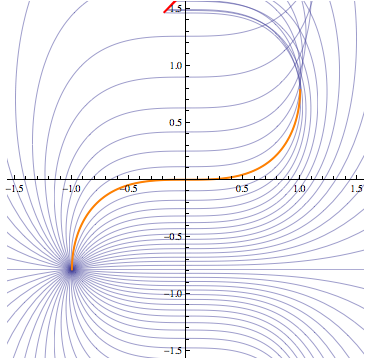}
  \caption{The conjugate geodesic}
  \label{fig:conj}
\end{figure}

We are interested in the small time asymptotic expansion of $p_t(q_0,q_1)$, where $p_t$ denotes the heat kernel of the sub-Riemannian heat equation
$$\partial_t \phi=\lapl \phi, \qquad \lapl=X^2+Y^2=\partial_x^2+x^2\partial_y^2.$$

Here the sub-Laplacian is not the intrinsic one but is computed with respect to the standard Lebegue measure of $\R^2$. Indeed in this case the intrinsic volume $\mu=\frac{1}{|x|} dxdy$ is diverging along the singular set $\mc{Z}=\{x=0\}$ hence our results does not apply since $\mu$ is not smooth. (See \cite{boscain-laurent} for a discussion of the intrinsic heat equation in the Grushin plane.)

An integral representation for the heat kernel for the operator $\partial_x^2+x^2\partial_y^2$ can be easily obtained by computing the Fourier transform with respect to the $y$ variable and then using the Mehler kernel for the quantum harmonic oscillator.  Its expression, given $q=(x,y),q'=(x',y')$ is
\begin{align*}
p_t(q,q')=\frac{1}{(2\pi t)^{3/2}}&\int_{-\infty}^{\infty}    
\exp\left(\frac{x x'}{t}\frac{ \tau  }{\text{sinh}(2 \tau )}-\frac{\left(x^2+x'^2\right)
  }{2 t}\frac{ \tau }{ \tanh (2 \tau )}\right) \times\\
&\times \sqrt{\frac{\tau}{\text{sinh}(2 \tau )}}
   \exp\left( \frac{i \left(y-y'\right) \tau }{t} \right)
 d\tau\nn.
\end{align*}
However from this formula it seems hard to find an asymptotic expansion for $t$ small except on the diagonal at the origin.

Thanks to Corollary \ref{t:main}, to compute the asymptotic expansion of the heat kernel $p_t(q_0,q_1)$ we are reduced to study the expansion of the hinged energy function $h_{q_0,q_1}$ near the origin (we omit the points in the notation in what follows) 
\begin{align*}
h(x,y)&=\frac{1}{2}\left(d^2(q_0,(x,y))+d^2(q_1,(x,y))\right) \\
&=\frac{1}{2}\left(d^2(q_0,(x,y))+d^2(q_0,(-x,-y))\right),
\end{align*}
where the last identity follows from the symmetries of the structure and implies that the expansion of $h$ at the origin contains only even order terms in $(x,y)$ and we are reduced to compute the even terms of the expansion of the function $(x,y)\mapsto d^2(q_0,(x,y))$.

\brem
By \eqref{eq:evvai} and the proof of Theorem \ref{THM:HAndConj} it follows that the Hessian of the hinged energy function $h$ is degenerate along the direction $(-1,1)$ since
$$\frac{d}{d\theta}\bigg|_{\theta=\pi/2}\EXP(\pi/2,\theta)=(-1,1).
$$
For this reason we consider the new coordinate system $(\bar x,\bar y)$ around 0 defined by
$\bar x=\frac{x+y}{2}, \quad \bar y=\frac{x-y}{2}$.
The Hessian of $h$ is diagonal in these coordinates.
\erem

Using the fact that the geodesics defined by \eqref{eq:bauendi1} are  parametrized by arclength, we can compute the derivatives of the distance with respect to $(\bar x,\bar y)$ by computing derivatives of $t$ from \eqref{eq:bauendi1} with implicit differentiation (as in Section \ref{s:h}). After some computations one finds the following expansion for $h$ (we omit the bar in $\bar x,\bar y$ for the new system of coordinates)
\begin{align} \label{eq:hhh}
h(x,y)=4x^2+\frac{\al-32}{24}&x^4-\frac{\al}{6}x^3y+\frac{\al-32}{4}x^2y^2\\
&-\frac{\al}{6}xy^3+\frac{\al}{24}y^4+O(\|(x,y)\|^5),\nn
\end{align}
where $\al=\frac{3}{2}\pi^2$.

Concerning our hinged energy function \eqref{eq:hhh} one can also show that the following explicit change of coordinates 
$$\phi(u,v)=(u+\frac{32 - \al}{192} u^3+\frac{\al}{48} v^3+\frac{\al}{48} u^2 v+\frac{16-\al}{32} u v^2,v).$$
 diagonalizes $h$ up to order 5. Namely
$$h(\phi(u,v))=8u^2+\frac{\al}{24}v^4+O(\|(u,v)\|^5).$$

A direct application of Corollary \ref{t:main} (recall also Lemma \ref{Lem:Splitting}), together with $d(q_0,q_1)=\pi/2$, gives
\bt The heat kernel $p_t(q_0,q_1)$ satisfies the following asymptotic expansion
\bqn \label{eq:q1}
p_t(q_0,q_1)=\frac{1}{t^{5/4}}e^{-\frac{\pi^2}{16t}}(C+O(t)).
\eqn
\et
\brem
Notice that the same expansion as in \eqref{eq:q1} holds for the symmetric point $q_2=(1,-3\pi/4)$.  If $q\notin\{q_1,q_2\}$  
$$p_t(q_0,q)\sim\frac{1}{t}e^{-\frac{d^2(q_0,q)}{4t}}(C+O(t)).$$
\erem

\brem
Corollary  \ref{corollarioMAIN} can be applied to compute the heat kernel asymptotics starting from the origin. In this case the cut locus is the $y$ axes and these points are not conjugate. 
On the diagonal, applying the Leandre - Ben Arous result \eqref{eq:formula1} with $Q=3$ (or using the explicit formula for the heat kernel given above),
one gets $p_t((0,0),(0,0))\sim C/t^{3/2}$ with $C>0$. Off diagonal, applying Corollary \ref{corollarioMAIN}, one gets  $p_t((0,0),(x,y))\sim C(x,y)/t$ for some $C(x,y)>0$.
\erem
The expansion of the heat kernel for the Grushin plane is summarized in the following table:

\medskip
\hspace{-3cm}
\begin{tabular}{|c|c|c|c|c|}\hline

                                        &diagonal&off diagonal& off diagonal   &off diagonal\\
                                        
                                        & (Leandre)& off cut locus&cut (non-conjugate)                                 &cut conjugate\\
                                        &(Ben Arous)  &(Ben Arous) &(Corollary {\bf 1})  &(Corollary  {\bf 2})\\
                                        \hline
&&&&\\
$p_t(q,q')$                       &$\sim \frac{C}{t}$&$\sim \frac{C}{t}e^{-d^{2}(q,q')/(4 t)}$&$\sim \frac{C}{t}e^{-d^{2}(q,q')/(4 t)}$&$\sim \frac{C}{t^{5/4}}e^{-d^{2}(q,q')/(4 t)}$\\
$q$ Riemannian point    &&&&\\\hline
&&&&\\
$p_t(q,q')$                       &$\sim \frac{C}{t^{3/2}}$&$\sim \frac{C}{t}e^{-d^{2}(q,q')/(4 t)}$&$\sim \frac{C}{t}e^{-d^{2}(q,q')/(4 t)}$&---\\
$q$ degenerate point      &&&&\\\hline
\end{tabular}

\appendix
\section{Extension to rank-varying sub-Riemannian structures}
\label{appendix}

In this section we give a more general definition of sub-Riemannian manifold (that we call rank-varying sub-Riemannian manifold). This definition includes also as a particular case Riemannian manifolds. For a more complete presentation one can see \cite{nostrolibro}. All the results of the paper hold for this more general structure.

Let $M$ be an $n$-dimensional smooth manifold.  Given a vector bundle $\E$ over $M$, the $\con^\infty(M)$-module of smooth sections of $\E$ is denoted by $\Gamma(\E)$. For the particular case  $\E=TM$, the set of smooth vector fields on $M$ is denoted by $\VecM$. 

\begin{definition}\label{fiberrvd}
An {\it $(n,k)$-\rvd} on an $n$-dimensio\-nal manifold $M$ is 
a pair $(\E,\f)$ where $\E$ is a vector bundle of rank $k$ over $M$ and $\f:\E\rightarrow TM$ is a morphism of vector bundles, i.e.\ {\bf (i)}  the diagram 
$$
\xymatrix{
 \E  \ar[r]^{\f} \ar[dr]_{\pi_\E}   & TM \ar[d]^{\pi}            \\
 & M                          
}    
$$
 commutes, where  $\pi:TM\rightarrow M$ and $\pi_\E:\E\rightarrow M$ denote the canonical projections and {\bf (ii)} $\f$ is linear on fibers.
Moreover, we require the map $\sigma\mapsto\f\circ\sigma$ from $\Gamma(\E)$ to $\VecM$ to be injective.
\end{definition}

Let $(\E,\f)$ be an $(n,k)$-\rvd, $\bD=\{\f\circ\sigma\mid\sigma\in\Gamma(\E)\}$ be its associated submodule and denote by $\bD_q$  the linear subspace $\{V(q)\mid  V\in \bD\}=\f(\E_q)\subseteq  T_q M$.
Let  $\mathrm{Lie}(\bD)$ be the smallest Lie subalgebra of
$\mathrm{Vec}(M)$
containing $\bD$ and, for every $q\in M$, let $\mathrm{Lie}_q(\bD)$ be the linear subspace of $T_qM$ whose elements are the evaluation at $q$ of elements belonging to $\mathrm{Lie}(\bD)$.
We say that $(\E,\f)$ satisfies the {\it  H\"ormander condition} if
$\mathrm{Lie}_q(\bD)=T_q M$ for every $q\in M$.

\begin{definition}\label{gensrs}
An {\it $(n,k)$-\rvsR}  is a triple $(\E,\f,\langle\cdot,\cdot\rangle)$ where $(\E,\f)$ is a Lie bracket generating $(n,k)$-\rvd\ on a  manifold $M$ and $\langle\cdot,\cdot\rangle_q$ is a   scalar product on $\E_q$ smoothly depending on $q$. 
\end{definition}

Several classical structures can be seen as particular cases of \rvsR s, e.g.,  Riemannian structures (when $\E=TM$ and $f=id$) and constant-rank sub-Riemannian structures (as defined in Section \ref{s:srg}). An $(n,n)$-\rvsR\ is called an {\it $n$-dimensional almost-Riemannian structure}. An example of 2-almost Riemannian structure is provided by the Grushin plane, see \cite{agrachevboscainsigalotti,agrboschaghe}. 

If $\sigma_1,\dots,\sigma_k$ is an orthonormal frame for $\langle\cdot,\cdot\rangle$ on an open subset $\Omega$ of $M$, an {\it  orthonormal frame} in $\Omega$ for the \rvsR\ is given by $X_1,\ldots,X_k$, where $X_i:=\f\circ\sigma_i$. Orthonormal frames are systems of local generators of $\bD$. 
For every $q\in M$ 
and every $v\in\bD_q$ define
\bqn
\Gq(v)=\inf\{\langle u, u\rangle_q \mid u\in \E_q,\f(u)=v\}.
\eqnn
Notice that if ${X_1,\ldots,X_k}$ is an orthonormal frame for the \rvsR\ in $\Omega$, then it may happen that there exist a $q\in \Omega$ such that $\dim \text{span}\{X_1(q),\ldots,X_k(q)\}<k $ and that $\Gq(X_i(q))<1$ for some $i$.

A Lipschitz continuous curve $\g:[0,T]\to M$ is said to be \emph{horizontal} (or \emph{admissible}) 
if there exists a measurable essentially bounded function 
\bqn
[0,T]\ni t\mapsto u(t)\in \E_{\g(t)},
\eqnn
called  {\it control function}, 
such that 
$\dot \g(t)=\f(u(t))$  for almost every $t\in[0,T]$. 
Given an admissible 
curve $\g:[0,T]\to M$, the {\it length of $\g$} is  
\bqn
\ell(\g)= \int_{0}^{T} \sqrt{ {\bf G}_{\gamma(t)}(\dot \g(t))}~dt.\eqnn

The {\it Carnot--Caratheodory distance} is defined as
\bqn\nonumber
d(q_0,q_1)=\inf \{\ell(\g)\mid \g(0)=q_0,\g(T)=q_1, \g\ \mathrm{admissible}\}.
\eqn

As in the classical sub-Riemannian case, the \hp\ of connectedness of $M$ and the H\"ormander condition guarantees the finiteness and the continuity of $d(\cdot,\cdot)$ with respect to the topology of $M$.
 
For rank-varying sub-Riemannian structures the  definitions of minimizers, geodesics, normal and abnormal extremals and the formulation of the Pontryagin Maximum Principle are the same as in the constant rank case. Also the definition of cut and conjugate loci are the same. 
Thanks to the injectivity assumption, the definition of the horizontal gradient is still $\grad \phi= \sum_{i=1}^k X_i(\phi) X_i$.  
  The definition of the Popp's volume is instead more delicate, since the volume diverges while approaching a point in which there is a drop of rank of the distribution.
However, for a smooth volume $\mu$ the sub-Laplacian still has the form $\lapl=\sum_{i=1}^k X_i^2 + (\dive X_i)X_i$, and all the results of the paper hold in this case.

\vspace{0.2cm}
{\bf Acknowledgements.} The authors would like to thank
Fabrice Baudoin and Andrei Agrachev for helpful discussions. The authors also thank IHP for its hospitality during the finishing of this paper.
%%%%%%%%%%%%%%%%%%%%%
{\small
\bibliographystyle{siam}
\bibliography{SRNeel-Biblio}
}
\end{document}